\theoremstyle{plain} 
\newtheorem{theo}{Theorem}[section]
\newtheorem{prop}[theo]{Proposition}
\newtheorem{lem}[theo]{Lemma}
\newtheorem{coro}[theo]{Corollary}
\theoremstyle{definition}
\newtheorem{rk}[theo]{Remark}
\numberwithin{equation}{section}
\newcommand{\N}{{\mathbb N}}
\newcommand{\R}{{\mathbb R}}
\newcommand{\calE}{{\cal{E}}}
\newcommand{\ccalE}{\check{\cal{E}}}
\newcommand{\calD}{{\cal{D}}}
\newcommand{\ccalD}{\check{\cal{D}}}
\DeclareMathOperator{\ran}{ran}
\DeclareMathOperator{\disc}{disc}
\DeclareMathOperator{\dom}{dom}
\begin{document}
\title{Computations and global properties for traces of Bessel's Dirichlet form}
\author{\normalsize
Ali BenAmor\footnote{corresponding author} \footnote{High institute for transport and logistics. University of Sousse, Tunisia. E-mail: ali.benamor@ipeit.rnu.tn},
Rafed Moussa \footnote{Department of Mathematics, High school of sciences and technology of Hammam Sousse. University of Sousse, Tunisia. E-mail: rafed.moussa@gmail.com}
}
\date{\today}
\maketitle

\begin{abstract}
We compute explicitly traces of the Dirichlet form related to the Bessel process with respect to discrete measures as well as measures of mixed type. Then some global properties of the obtained Dirichlet forms, such as conservativeness, irreducibility and compact embedding for their domains are discussed.
\end{abstract}



\section{Introduction}
The aim of the paper is two-fold: 1) Compute the trace of the Dirichlet form related to Bessel's operator (process) on some subsets of $\R$ via the method developed in \cite{BBST}.\\
2) Analyze some global  properties of the obtained Dirichlet forms. Mainly we shall be concerned with conservativeness property, irreducibility and global properties of elements of the domain of the trace form as well as compact embedding for their domains.

Traces of quadratic forms in the framework of Hilbert spaces can be performed  via the construction made in \cite{BBST}. In particular for discrete sets the trace operator can be interpreted as a discretization of  Bessel's operator. For sets composed from composites of continuum and discrete sets we obtain mixed-type Bessel's operator on graphs. What we shall obtain, for composite sets is an operator commonly named 'Laplacian on quantum graphs'.

The focus on this example is motivated mainly by the following reasons: First to construct a discrete Bessel operator, second to analyze how stable the properties of conservativeness (or stochastic completeness), irreducibility as well as global properties of functions from the domain of the initial form (decay property for example), when passing to the trace form.\\
Let us precise that the Bessel operator on the right half axis $(0,\infty)$ is the prototype of a conservative transient and irreducible diffusion. A natural question arises, whether these global properties are inherited by the trace form (or equivalently the trace operator). Whereas it is known that in a abstract framework transience is inherited by the trace form (see \cite[Lemma 622, p.317]{Fukushima}), there are no definitive answers concerning conservativeness and irreducibility.

In these notes we will determine, among other results, which measures (or supports of measures) preserve the aforementioned global properties and which do not. In particular our analysis shows that conservation property  for the trace form is strongly correlated to topological as well as geometric properties of  the support of the considered  measures. In particular we will prove that if the support of the measure is finite then the obtained form is never conservative! Whereas, if the support consists of a continuum and a discrete a discrete set, we will prove that the chosen metric plays a central  role to decide whether the obtained form is conservative or not. Let us emphasize that in the latter case the obtained Dirichlet form is a Dirichlet form related to a quantum graph.

\section{The basics}
Let us introduce some notations. We denote by $I:=(0,\infty)$ and $AC(I)$ the space of absolutely continuous function on $I$. For each $n\in\N,\ n\geq 1$ let $m$ be the measure defined  on $I$ by
$$
dm=2x^{n-1}dx.
$$
We designate by
$$
\calD_0:=\{u:I\to\R,\ u\in AC(I),\ \int_0^\infty (u'(x))^2 x^{n-1}\,dx<\infty\},
$$
and $\calE$ the Dirichlet form defined in $L^2(I,m)$ by
\begin{eqnarray}
\calD=\calD_0\cap L^2(I,m),\ \calE[u]= \int_0^\infty (u'(x))^2 x^{n-1}\,dx.
\label{BesselForm}
\end{eqnarray}
Let us   consider the differential expression defined by
\begin{equation}
\mathcal{L} := -\frac{1}{2}\big(\frac{d^2}{dx^2} + \frac{2 \nu+1}{ x} \frac{d}{dx}\big),\ \quad\text{ where } \nu:=\frac{n}{2}-1.
\end{equation}
It is well known that $\calE$ is a regular strongly local Dirichlet form in $L^2(I,m)$ (a diffusion). Moreover, the positive selfadjoint operator  associated with the form $\calE$ via Kato's representation theorem, which we denote by $L$ is defined by
\begin{align}
D(L)&=\{u\in\calD,\ u' \in AC(I),\ \lim_{x\downarrow 0}x^{n-1}u'(x)=0, \mathcal{L} u:= -\frac{1}{2}u'' -\frac{2 \nu +1}{2x}u'\in L^2(I,m)\}\nonumber\\
Lu &= \mathcal{L} u,\ u\in D(L).
\end{align}
It is nothing else but the Bessel operator. In the probabilistic jargon, $L$ is the generator of the Bessel process on the half-line. It is also the radial component of the Laplacian (or the standard Brownian motion) on $\R^n$.\\
Let $p_t(x,y) $ the corresponding heat kernel. It is well known that (see \cite{Bogus-2016})
\begin{equation}
p_t(x,y) := \frac{1}{2t}(xy)^{-\nu} \exp \big( -\frac{x^2 + y^2}{2 t}\big) I_{\nu}\big(\frac{xy}{t}\big),\ \forall\,x,y,t>0,
\label{HeatKernel}
\end{equation}
where $I_\nu$ is the modified Bessel function given by power series
$$
 I_{\nu}(x)=\sum_{k=0}^{\infty}
 \frac{1}{\Gamma(k+\nu+1)k!}\big(\frac{x}{2}\big)^{2k+\nu}.
$$
We quote that $p_t$ is the fundamental solution of the heat equation $-\frac{\partial u}{\partial t}=Lu$.\\
Let $T_t:=e^{-tL},\ t>0$ be the heat semigroup associated with $L$. Then $T_t$ is the integral operator whose kernel is $p_t$. The form $\calE$ (or the operator  $T_t$) is said to be conservative (or stochastically complete) whenever
\begin{eqnarray}
 T_t 1=1 \quad\text{ for some and hence every } t>0,
\end{eqnarray}
where $T_t$ stands for the $L^\infty$-semigroup induced by the Dirichlet form $\calE$. Analytically, conservativeness  means that the heat amount inside the system is conserved. Whereas probabilistically it means that the process has an infinite life time, whatever its start point is. These physical interpretations are the main motivations to study the conservativeness property of a given Dirichlet form.\\
Let us start  by  giving some inequalities. To our best knowledge these inequalities are new.
\begin{theo}[Inequalities]
\begin{enumerate}
\item Sobolev inequality. For every $n\geq 3$ and every  $2<p\leq \frac{2n}{n-2}$, there is a finite constant $c>0$ such that
\begin{eqnarray}
\big(\int_0^\infty |u(t)|^p\, t^{n-1} dt\big)^{2/p}\leq c\calE[u],\ \forall\,u\in\calD.
\label{Sobolev}
\end{eqnarray}
\item Generalized Strauss inequality. For every $n\geq 3$ and every $1/2\leq \sigma\leq 1$ there is a finite constant $c>0$ such that
\begin{eqnarray}
\sup_{t>0} t^{\frac{n-2\sigma}{2}} |u(t)|\leq c\|u\|_{L^2(I,m)}^{1-\sigma}(\calE[u])^{\frac{\sigma}{2}},\ \forall\,u\in\calD.
\label{Strauss}
\end{eqnarray}
\end{enumerate}
\label{Ineq}
\end{theo}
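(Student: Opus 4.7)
The cleanest approach exploits the classical identification, for integer $n\ge 3$, of $\calE$ with the Dirichlet integral on $\R^n$ restricted to radial functions: associating to $u\in\calD$ the radial function $\tilde u(x):=u(|x|)$, one has $\int_{\R^n}|\tilde u|^q\,dx = |S^{n-1}|\int_0^\infty |u(t)|^q t^{n-1}\,dt$ and $\int_{\R^n}|\nabla \tilde u|^2\,dx = |S^{n-1}|\calE[u]$. I would use this to deduce (\ref{Sobolev}) from known $\R^n$ Sobolev embeddings, while proving (\ref{Strauss}) by direct one-dimensional integration by parts on $I$.

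\paragraph{Sobolev.} Under the above identification, the critical case $p=\frac{2n}{n-2}$ of (\ref{Sobolev}) is nothing but the classical Gagliardo--Nirenberg--Sobolev embedding $\dot H^1(\R^n)\hookrightarrow L^{2n/(n-2)}(\R^n)$ transferred to the Bessel setting. The subcritical range $2<p<\frac{2n}{n-2}$ then follows by standard $L^p$-interpolation, using that $\calD\subset L^2(I,m)$ by definition together with the critical case just established.

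\paragraph{Strauss.} I would treat the two endpoints and then interpolate. For $\sigma=1$, start from $u(t)=-\int_t^\infty u'(s)\,ds$ (valid after justifying the boundary term) and apply Cauchy--Schwarz with the split $|u'(s)|=|u'(s)|s^{(n-1)/2}\cdot s^{-(n-1)/2}$. Since $n\ge 3$ makes $\int_t^\infty s^{-(n-1)}\,ds = t^{-(n-2)}/(n-2)$ convergent, this yields $|u(t)|^2\le (n-2)^{-1}t^{-(n-2)}\calE[u]$, which is the $\sigma=1$ instance. For the classical endpoint $\sigma=1/2$, write $u(t)^2 t^{n-1}=-\int_t^\infty\bigl(u(s)^2 s^{n-1}\bigr)'\,ds$, expand the derivative, drop the favorable-signed term $(n-1)u^2 s^{n-2}\ge 0$, and estimate the remaining $2\int_t^\infty |u u'|s^{n-1}\,ds$ by Cauchy--Schwarz against $\|u\|_{L^2(m)}\cdot\calE[u]^{1/2}$; this gives $t^{n-1}|u(t)|^2\le c\,\|u\|_{L^2(m)}\calE[u]^{1/2}$. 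To obtain (\ref{Strauss}) for $\sigma\in(1/2,1)$, raise these two endpoint bounds to weights $\beta=2\sigma-1$ and $\alpha=2-2\sigma$ (both in $[0,1]$ with $\alpha+\beta=1$) and multiply: the $t$-exponent becomes $-(n-2\sigma)$, the $\calE$-exponent becomes $\sigma$, and the $\|u\|_{L^2(m)}$-exponent becomes $2(1-\sigma)$, matching (\ref{Strauss}) after a square root.

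\paragraph{Main obstacle.} The subtle technical point is legitimizing the boundary terms at $+\infty$, namely $u(T)\to 0$ and $T^{n-1}u(T)^2\to 0$, used in the representation and integration-by-parts arguments. These are not automatic for a general $u\in\calD$. I would overcome this by first proving both endpoint Strauss inequalities on a dense subclass (for instance $C_c^\infty(I)$, dense in $(\calD,\calE)$ by regularity of the form), and then extending by continuity using that both sides of (\ref{Strauss}) and (\ref{Sobolev}) are continuous with respect to convergence in the form norm $(\calE[\cdot]+\|\cdot\|_{L^2(m)}^2)^{1/2}$.
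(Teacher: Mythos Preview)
For the Sobolev inequality your approach---lift $u$ to the radial function $\tilde u(x)=u(|x|)$ on $\R^n$ and invoke the Euclidean Sobolev embedding---is exactly what the paper does. For the Strauss inequality the paper proceeds differently: rather than your direct one-dimensional computation, it again passes to $\R^n$ via the radial lift and then quotes the Strauss--Ozawa radial decay estimates for $H^1(\R^n)$ (specifically \cite[Propositions~1 and~3]{Ozawa}), transferring back by spherical coordinates. Your route is more elementary and self-contained: the endpoint cases $\sigma=1$ and $\sigma=1/2$ come out of explicit integration by parts and Cauchy--Schwarz on $(0,\infty)$, and the intermediate $\sigma$ follow by the multiplicative interpolation you describe. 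The paper's route is shorter but leans on an external reference; yours exposes the mechanism directly and, incidentally, works verbatim for non-integer parameters $n>2$, where the radial-lift trick is unavailable.

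One minor caveat on the Sobolev side (shared with the paper's proof): your interpolation step for the subcritical range $2<p<\frac{2n}{n-2}$ necessarily introduces a factor $\|u\|_{L^2(m)}$ on the right, so it does not literally yield (\ref{Sobolev}) with $\calE[u]$ alone on the right-hand side. Indeed, by the scaling $u_\lambda(t)=u(\lambda t)$ no pure-$\calE$ bound of the form $\|u\|_{L^p(m)}^2\le c\,\calE[u]$ can hold unless $p=2^*$. The paper's one-line proof has the same gap; fortunately only the critical exponent is used downstream.
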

\begin{proof}
Let $u\in\calD$. Let $v$ be the function defined by  $v(z)=u(|z|),\ z\in\R^n\setminus\{0\}$. Then $v$ is radially symmetric and lies in the Sobolev space $H^1(\R^n)$ (use spherical coordinates). Hence, inequality (\ref{Sobolev}) follows from the classical Sobolev inequality by using spherical coordinates once again.\\
In order to prove (\ref{Strauss}) we make use of \cite[Proposition 1]{Ozawa} for $\sigma=1$ and  \cite[Proposition 3]{Ozawa} for $1/2\leq\sigma<1$ to obtain
$$
\sup_{z\in\R^n\setminus\{0\}}|z|^{\frac{n-2\sigma}{2}} |v(z)|\leq c\|v\|_{L^2(\R^n,dz)}^{1-\sigma}
\|\nabla v\|_{L^2(\R^n,dz)}^\sigma,
$$
which is, by using spherical coordinates, exactly the demanded inequality.
\end{proof}
As an immediate consequence of the latter theorem we obtain:
\begin{coro} Let $n\geq 3$.
\begin{enumerate}
\item The semigroup $T_t$ is ultra-contractive for every $t>0$. Moreover,
\begin{align}
p_t(x,y)\leq ct^{-n/2},\ \forall\,t,x,y>0.
\label{UpperHeat}
\end{align}
\item Every function from $\calD$ decays at infinity at most as $t^{-\frac{n-1}{2}}$.
\end{enumerate}
\end{coro}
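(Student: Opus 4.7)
The plan is to obtain both items directly from Theorem \ref{Ineq}; neither requires any additional heavy machinery, so the proof will essentially consist of invoking standard implications.

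For the ultracontractivity claim (1), I would appeal to the classical equivalence, due essentially to Varopoulos (treated in detail in Davies's monograph on heat kernels, and in the Dirichlet-form setting in \cite{Fukushima}), between a critical-exponent Sobolev embedding and an operator-norm estimate on the associated symmetric Markovian semigroup. Applied to $\calE$, the Sobolev inequality (\ref{Sobolev}) at the endpoint $p=\frac{2n}{n-2}$ is equivalent, via a Hölder interpolation, to the Nash inequality $\|u\|_{L^2(I,m)}^{2+4/n}\leq c\,\calE[u]\,\|u\|_{L^1(I,m)}^{4/n}$, which through the standard differential-inequality argument on $t\mapsto\|T_tu\|_2^2$ yields $\|T_t\|_{L^1(I,m)\to L^2(I,m)}\leq c\,t^{-n/4}$. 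Combining this with its dual $L^2\to L^\infty$ bound through the semigroup property then gives $\|T_t\|_{L^1(I,m)\to L^\infty(I,m)}\leq c\,t^{-n/2}$. Since $T_t$ acts by integration against the non-negative symmetric kernel $p_t$, this operator norm coincides with $\sup_{x,y>0}p_t(x,y)$, which delivers the pointwise bound (\ref{UpperHeat}) and in particular the ultracontractivity of $T_t$ for every $t>0$.

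For the decay claim (2), I would merely specialize the generalized Strauss inequality (\ref{Strauss}) to the endpoint $\sigma=\frac{1}{2}$. This immediately produces, for every $u\in\calD$ and every $t>0$, a uniform pointwise bound of the form $|u(t)|\leq c\,t^{-(n-1)/2}\|u\|_{L^2(I,m)}^{1/2}(\calE[u])^{1/4}$, which is exactly the announced decay rate at infinity.

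I do not anticipate any real obstacle here, since both assertions are essentially one-line corollaries of Theorem \ref{Ineq}. The only point I would take a moment to verify is that the hypotheses behind the Sobolev-to-ultracontractivity implication genuinely apply in the present setting, namely that $\calE$ is a regular symmetric sub-Markovian Dirichlet form on $L^2(I,m)$ whose semigroup is given by integration against the non-negative symmetric kernel $p_t$; this has already been recorded earlier in the section, so the argument closes without complication.
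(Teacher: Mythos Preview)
Your proposal is correct and follows essentially the same route as the paper: for (1) the paper also passes from the Sobolev inequality (\ref{Sobolev}) to a Nash inequality and then to the kernel bound (\ref{UpperHeat}), citing \cite{Carlen} for both implications rather than spelling out the Varopoulos/Davies argument as you do; for (2) the paper simply says the claim is a direct consequence of Theorem~\ref{Ineq}-2), and your specialization to $\sigma=\tfrac12$ is exactly the right instance.
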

\begin{proof}
According to \cite [Theorem 2.17]{Carlen}, Sobolev inequality leads to Nash inequality which in turn, according to    Theorem 2.1 \cite [Theorem 2.1]{Carlen} leads to the upper bound (\ref{UpperHeat}).\\
The proof of assertion 2) is a direct consequence from \ref{Ineq}-2).
\end{proof}

Let us turn our attention to prove an other global property for $\calE$, namely, conservativeness. We stress that the following result is known. We shall prove it utilizing the formula of the heat kernel.

\begin{theo}
The Dirichlet form $\calE$ is  conservative.
\label{Conservative}
\end{theo}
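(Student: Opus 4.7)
The plan is to prove conservativeness by the direct analytic route: verify that $\int_0^\infty p_t(x,y)\,dm(y)=1$ for every $x,t>0$. Since $T_t$ is the integral operator with kernel $p_t$ and $p_t$ has an explicit closed form, this reduces to evaluating one integral.

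First, I would substitute the explicit expression of $p_t$ together with $dm(y)=2y^{n-1}dy$ and the relation $n-1=2\nu+1$. After cancelling the prefactor $2$, pulling the $x$-dependent factors out of the integral, and noticing that the exponent of $y$ in the integrand becomes $n-1-\nu=\nu+1$, the task becomes showing
\begin{equation*}
\frac{x^{-\nu}}{t}\,e^{-x^2/(2t)}\int_0^\infty y^{\nu+1}\,e^{-y^2/(2t)}\,I_\nu\!\left(\tfrac{xy}{t}\right)dy=1.
\end{equation*}

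Second, I would invoke the classical Weber--Schafheitlin type identity
\begin{equation*}
\int_0^\infty s^{\nu+1}\,e^{-as^2}\,I_\nu(bs)\,ds=\frac{b^{\nu}}{(2a)^{\nu+1}}\,\exp\!\left(\frac{b^2}{4a}\right),\qquad a>0,\; b\in\R,
\end{equation*}
(see e.g. Gradshteyn--Ryzhik 6.631.4, or Watson's treatise on Bessel functions). Applying it with $a=1/(2t)$ and $b=x/t$ produces $(x/t)^\nu\, t^{\nu+1}\, e^{x^2/(2t)}$; multiplying by the prefactor $x^{-\nu}t^{-1}e^{-x^2/(2t)}$ gives exactly $1$, as desired. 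One then needs a brief remark that absolute convergence justifies the interchange of sum and integral behind the identity (using the series definition of $I_\nu$ and positivity).

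Finally, a short sentence should record that the equality is independent of the starting point $x>0$ and of the time $t>0$, which is precisely the conservativeness condition $T_t\mathbf{1}=\mathbf{1}$. I expect the only real obstacle to be locating/justifying the Bessel--Gaussian integral identity; once that is cited, the computation itself is three lines of bookkeeping with the exponents $\nu$ and $n$. Alternative routes (Khasminskii's test, or interpreting $L$ as the radial part of the $n$-dimensional Laplacian and quoting stochastic completeness of Brownian motion on $\R^n$) exist, but the heat-kernel computation is the cleanest given that formula \eqref{HeatKernel} is already on the table.
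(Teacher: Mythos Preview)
Your proposal is correct and follows essentially the same route as the paper: both reduce conservativeness to the identity $\int_0^\infty p_t(x,y)\,dm(y)=1$ and evaluate the resulting Bessel--Gaussian integral. The only cosmetic difference is that the paper expands $I_\nu$ into its power series and computes each term via the Gamma function (then resums to $e^{x^2/(2t)}$), whereas you cite the closed-form identity from Gradshteyn--Ryzhik that packages exactly that computation.
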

\begin{proof}
We shall prove $T_t 1=1$ for every $t>0$, where $T_t$ is the $L^\infty$ related to $\calE$. From the standard $L^\infty$-semigroup for $\calE$, the latter identity is equivalent to
$$
\int_0^\infty p_t(x,y)\,dm(y)=1.
$$
By monotone convergence theorem we get
\begin{eqnarray*}
\int_0^\infty p_t(x,y)\,dm(y)
&= & \int_0^\infty  \frac{1}{2 t}(xy)^{-\nu} \exp \big( -\frac{x^2 + y^2}{2 t}\big) I_{\nu}\big(\frac{xy}{t}) \,dm(y)\\
& = &  \int_0^\infty  \frac{1}{t}(xy)^{-\nu} \exp \big( -\frac{x^2 + y^2}{2 t}\big) I_{\nu}\big(\frac{xy}{t}) \, y^{2\nu+1} dy\\
& = &  \int_0^\infty  \frac{1}{t}(xy)^{-\nu} \exp \big( -\frac{x^2 + y^2}{2 t}\big)\sum_{k=0}^{\infty}\frac{(\frac{xy}{2t})^{2k+\nu}}{\Gamma(k+\nu+1)k!} \, y^{2\nu+1}\,dy\\
& = & e^{-\frac{x^2}{2t}}\sum_{k=0}^{\infty}\frac{ x^{2k}}{2^{2k+\nu}t^{2k+\nu+1}\Gamma(k+\nu+1)k!}\int_0^\infty  e^{-\frac{y^2}{2t}} y^{2(k+\nu)+1}\,dy.
\end{eqnarray*}
Let us recall the  Gamma function which is  defined by

$$\Gamma (z) :=   \int_0^\infty e^{-\tau}\ \tau^{z-1}\, d\tau,\ \forall z>0.  $$
Therefore, by using the change of variable $\tau = \frac{y^2}{2 t}$ we obtain
\begin{eqnarray*}
\int_0^\infty  e^{-\frac{y^2}{2t}} y^{2(k+\nu)+1}\, dy
& = & \int_0^\infty  e^{-\tau}\ (\sqrt{2 t \tau})^{2(k+\nu)}\ t d\tau\\
& = & 2^{k+\nu} t^{k+\nu+1} \Gamma(k+\nu+1).
\end{eqnarray*}
Finally, we achieve

\begin{eqnarray*}
\int_0^\infty p_t(x,y)\,dm(y)
&= & e^{-\frac{x^2}{2t}}\sum_{k=0}^{\infty}\frac{ x^{2 k}2^{k+\nu} t^{k+\nu+1} \Gamma(k+\nu+1)}{2^{2k+\nu}t^{2k+\nu+1}\Gamma(k+\nu+1)k!} \\
& = & e^{-\frac{x^2}{2t}}\sum_{k=0}^{\infty}\frac{ x^{2 k}}{(2 t)^k k!} = 1,
\end{eqnarray*}
yielding the conservativeness of $\calE$.
\end{proof}
It is well known that $\calE$ is transient. However, for the convenience of the reader we shall restate this property and proved with a different manner.
\begin{prop}
The form $\calE$ is transient.
\end{prop}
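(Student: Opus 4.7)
The plan is to derive transience from the Sobolev inequality just established, via the standard characterization (see, e.g., \cite[Theorem 1.5.1]{Fukushima}): a regular Dirichlet form $(\calE,\calD)$ on $L^2(X,m)$ is transient if and only if there exists $g\in L^1(X,m)$, $g>0$ $m$-a.e., such that
$$\int_X |u|\,g\,dm\leq \sqrt{\calE[u]},\quad \forall\,u\in\calD.$$
Thus the whole task reduces to exhibiting such a reference weight $g$ for the Bessel form.

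First I would set $p:=\frac{2n}{n-2}$ so that (\ref{Sobolev}) reads $\|u\|_{L^p(I,m)}\leq c\sqrt{\calE[u]}$, and let $p':=\frac{2n}{n+2}$ denote its conjugate exponent. Next, I would pick a strictly positive weight $g$ lying in $L^1(I,m)\cap L^{p'}(I,m)$; a convenient choice is $g(x):=e^{-x^{2}}$, since $\int_{0}^{\infty}e^{-qx^{2}}x^{n-1}\,dx$ converges for every $q>0$, so $g$ belongs to every $L^q(I,m)$.

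Combining Hölder's inequality with the Sobolev bound then yields
$$\int_I |u|\,g\,dm\leq \|u\|_{L^p(I,m)}\,\|g\|_{L^{p'}(I,m)}\leq C\sqrt{\calE[u]},$$
where $C:=c\,\|g\|_{L^{p'}(I,m)}$. Replacing $g$ by $g/C$ (which preserves both positivity and integrability) produces exactly the reference function required by the characterization, and transience follows.

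The argument carries no serious technical obstacle beyond Theorem \ref{Ineq} itself; the only mild points to watch are citing the correct characterization of transience, checking the integrability of $g$ against $m$ with both exponents $1$ and $p'$, and noting the implicit restriction $n\geq 3$ which is forced by the Sobolev inequality. If one wished to cover the proposition in full generality for these $n$, an alternative route would be to exhibit the explicit positive superharmonic function $h(x)=x^{-(n-2)}$ (which satisfies $\mathcal{L}h=0$ on $I$ by a direct computation) and invoke the superharmonic criterion for transience; I would, however, favour the Sobolev route since it re-uses material already proved in the paper.
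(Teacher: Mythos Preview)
Your proof is correct and follows essentially the same route as the paper: both combine the Sobolev inequality of Theorem~\ref{Ineq} with H\"older's inequality to produce a strictly positive $g\in L^1(I,m)$ satisfying the transience criterion, the only cosmetic difference being the choice of weight ($e^{-x^2}$ versus the paper's $e^{-t}$). Your write-up is in fact more explicit about the exponents and the integrability check, and you rightly flag the $n\geq 3$ restriction inherited from the Sobolev inequality.
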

\begin{proof}
Let $g(t):=e^{-t},\ t\geq 0$. Then $g>0,\ g\in L^p(I,m)$ for each $1\leq p\leq\infty$. By Sobolev inequality in conjunction with H\"older inequality we get
$$
\int_0^\infty|u|g\,dm\leq c\sqrt{\calE[u]},\ \forall\,u\in\calD,
$$
yielding the transience of $\calE$.
\end{proof}

\section{Global properties of traces of Bessel's Dirichlet form  on discrete sets}
In this section we shall first compute the trace of $\calE$ w.r.t to discrete measures supported by $\N$ for the special case $n=3$. Then we proceed to  investigate conservativeness and irreducibility properties of of the obtained trace form. Thus from now on we fix
$$
n=3 \quad\text{ and hence } \nu= 1/2.
$$
We set $\N_0=\N\cup \{0\}$.
\begin{lem}
Let $(a_k)_{k\in\N_0}$ be a sequence of real numbers such that $a_k\geq 0$ for all $k\in\N_0$. Let $\mu$ be the measure defined on $\N_0$ by
\begin{eqnarray}
\mu=\sum_{k\in\N_0} a_k\delta_k.
\label{atomic}
\end{eqnarray}
Then $\mu$ is a smooth measure with respect to $\calE$ (i.e. $\mu$ does not charge any point having  zero $\calE$-capacity) with support $\N_0$ if and only if $a_0=0$ and $a_k>0$ for each $k\in\N$.
\end{lem}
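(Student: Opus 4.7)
The plan is to reduce the equivalence to two capacity computations for single points. The key observation is that among the atoms $\{k\}_{k\in\N_0}$ of $\mu$, only the boundary point $\{0\}$ is $\calE$-polar; all other $\{k\}$ with $k\ge 1$ have strictly positive $\calE$-capacity. Given this dichotomy, smoothness is equivalent to $a_0=0$ and the support condition is equivalent to $a_k>0$ for every $k\in\N$, so both directions will follow at once.

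First I would verify that $\mathrm{Cap}_\calE(\{0\})=0$ by exhibiting a witness of arbitrarily small $\calE_1$-energy equal to $1$ near the origin. A natural candidate is the cutoff $\phi_\epsilon(x):=\max(1-x/\epsilon,0)$; it lies in $\calD$ since $\calD_0$ imposes no vanishing condition at $0$ and the weight $x^{n-1}=x^{2}$ tames the integrals at the origin. A short computation using $dm=2x^{2}\,dx$ gives $\calE[\phi_\epsilon]=\epsilon/3$ and $\|\phi_\epsilon\|^2_{L^2(I,m)}=\epsilon^{3}/15$, both tending to $0$ as $\epsilon\downarrow 0$. Hence $\{0\}$ is $\calE$-polar.

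Next I would show that $\mathrm{Cap}_\calE(\{k\})>0$ for each $k\in\N$. Applying the Strauss inequality of Theorem \ref{Ineq}(2) with $\sigma=1$ and $n=3$ yields the pointwise bound $|u(k)|\le c\,k^{-1/2}\,\calE[u]^{1/2}$ for every $u\in\calD$. Since functions in $\calD$ are (absolutely) continuous on $I$, any $u\in\calD$ dominating $1$ on a neighborhood of $k$ satisfies $u(k)\ge 1$, forcing $\calE[u]\ge k/c^2>0$; taking the infimum over such $u$ yields a strictly positive lower bound on $\mathrm{Cap}_\calE(\{k\})$.

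Putting the two capacity computations together, smoothness of $\mu$ is equivalent to $\mu(\{0\})=0$, i.e.\ $a_0=0$ (the $\sigma$-finiteness requirement being trivially met for an atomic measure with finite atoms), while the requirement that the topological support of $\mu$ exhaust the integers in $\overline{I}$ is equivalent to the positivity of the remaining coefficients $a_k$ with $k\in\N$. I expect the main obstacle to be the clean treatment of the boundary point $\{0\}$: since $0\notin I=(0,\infty)$, one cannot invoke a generic positivity-of-capacity statement for interior points of a one-dimensional Dirichlet form, and it is essential to exhibit the explicit cutoff $\phi_\epsilon$ above to certify that $\{0\}$ is polar.
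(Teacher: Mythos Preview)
Your proof follows essentially the same approach as the paper: both arguments reduce to showing $\mathrm{Cap}(\{0\})=0$ and $\mathrm{Cap}(\{k\})>0$ for $k\in\N$, the latter via the Strauss inequality with $\sigma=1$. The only difference is that you supply an explicit cutoff $\phi_\epsilon$ to prove that $\{0\}$ is polar, whereas the paper simply cites \cite[p.~339]{Yor} for this fact; your computations $\calE[\phi_\epsilon]=\epsilon/3$ and $\|\phi_\epsilon\|_{L^2(m)}^2=\epsilon^3/15$ are correct.

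One small technical point: your $\phi_\epsilon(x)=\max(1-x/\epsilon,0)$ satisfies $\phi_\epsilon(x)<1$ for every $x>0$, so it is not $\ge 1$ on any open subset of $I$ and is therefore not an admissible competitor for the capacity in the usual definition. The fix is immediate---replace $\phi_\epsilon$ by a function equal to $1$ on $(0,\epsilon/2]$ and linear down to $0$ on $[\epsilon/2,\epsilon]$; the energy and $L^2$-norm still vanish as $\epsilon\downarrow 0$, and the conclusion stands.
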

\begin{proof}
It is known (see \cite[p. 339]{Yor}) that $Cap(\{ 0 \})=0$. On the other hand by inequality (\ref{PointK}) we have
$$
k|u(k)|\leq c\sqrt{\calE[u]},\ \forall\,u\in\calD.
$$
Hence $Cap(\{ k \})\geq k^2/{c^2}$ for every $k\in\N$. Accordingly, $\mu$ is smooth if and only if $a_0=0$. Now the condition that the quasi-support of $\mu$ coincides with $\N_0$ is equivalent to $a_k>0$ for each $k\in\N$.
\end{proof}
In this section we fix a discrete measure
\[ 
\mu=\sum_{k\in\N} a_k\delta_k\ \text{ with }\ a_k>0,\ \forall\, k\in\N.
\]
Let $\check\calE$ be the trace of $\calE$ w.r.t $\mu$ (or on the set $\N$) (See \cite{Fukushima,Chen-Fukushima,BBST})). In order to compute $\ccalE$ we shall adopt the method developed in \cite{BBST}. Let us be more concrete and describe the strategy we shall follow toward computing $\ccalE$. Let $J$ be the restriction operator from $\calD$ to  $ L^2 (\N, \mu)=\ell^2(\mu)$ defined  as follows
\[
    D(J):=\{u\in \calD:\; \sum_{k\in\N} a_k u(k)^2 < \infty\},\quad Ju:=u|_{\N} \quad\text{for all }u\in D(J),
\]
It is easy to check that $J$ is closed in $(\calD,\calE_1)$. Moreover the regularity property for $\calE$ implies that $J$ has dense range. Obviously the kernel of $J$ is
$$
\ker J = \{ u\in \calD : u(k) = 0, \forall k\in \N \}.
$$
For every $u\in\calD,\ \lambda>0$, let $P_\lambda u$ be the orthogonal projection from the Dirichlet space $(\calD, \calE_\lambda)$ onto the orthogonal complement of $\ker J $ w.r.t. the scalar product $\calE_\lambda$. For each $\lambda>0$, we define a quadratic form $\ccalE_\lambda$ as follows:
\[
\ccalE_\lambda[Ju]:=\calE_\lambda[P_\lambda u],\ \forall\,u\in D(J).
\]
From \cite[Theorem 1]{BBBMath}, the form $\ccalE_\lambda$ is closed in $\ell^2(\mu)$ for each $\lambda>0$. Moreover by \cite[Theorem 2.1]{BBST} the family $(\ccalE_\lambda)_{\lambda>0}$  is monotone increasing. By definition, $\ccalE$ is the Mosco limit of $(\ccalE_\lambda)$ as $\lambda\downarrow 0$ (see \cite[Theorem 2.4]{BBST}).
\subsection{Computing $\ccalE$}
At first stage we shall establish an explicit formula for the approximating forms $\ccalE_\lambda$ for each $\lambda>0$.
\begin{lem}
Let $u\in D(J)$. Then $P_\lambda u$ is the unique function from $\calD$ which solves the differential equation
\begin{align}
\label{ModBessel}
    -\frac{1}{2}(P_\lambda u)'' -\frac{1}{x}(P_\lambda u)' +\lambda P_\lambda u & = 0 \quad\text{ in } \ (0,\infty)\setminus\N,\nonumber\\
    P_\lambda u & = u \quad\text{ on } \N.
\end{align}
\label{Projection}
\end{lem}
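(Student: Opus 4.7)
The plan is to unpack the abstract definition of $P_\lambda u$ and translate the orthogonal projection property into the classical boundary value problem (\ref{ModBessel}). By construction $P_\lambda u$ is the unique element of $\calD$ that decomposes $u$ as $u = P_\lambda u + (u - P_\lambda u)$ with $P_\lambda u$ in the orthogonal complement of $\ker J$ (for the $\calE_\lambda$ inner product) and $u - P_\lambda u \in \ker J$. The second statement gives the boundary datum $P_\lambda u = u$ on $\N$ immediately. So everything reduces to showing that the orthogonality relation $\calE_\lambda(P_\lambda u, v) = 0$ for every $v \in \ker J$ is equivalent, for $P_\lambda u \in \calD$, to the ODE on $(0, \infty) \setminus \N$.

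For the existence half, I would test the orthogonality against $\varphi \in C_c^\infty((k, k+1))$ for each $k \in \N_0$; such $\varphi$ lies in $\calD \cap \ker J$ because its support sits strictly between two consecutive integers. Expanding
\[
\calE_\lambda(P_\lambda u, \varphi) = \int_k^{k+1} (P_\lambda u)'(x)\,\varphi'(x)\,x^{2}\,dx + 2\lambda \int_k^{k+1} P_\lambda u(x)\,\varphi(x)\,x^{2}\,dx = 0
\]
and integrating by parts (boundary contributions vanish by compact support) yields the distributional identity $-(x^{2}(P_\lambda u)')' + 2\lambda x^{2}P_\lambda u = 0$ on $(k, k+1)$, which is just (\ref{ModBessel}) multiplied through by $-2x^{2}$. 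One-dimensional elliptic regularity with smooth coefficients on each $(k, k+1)$ then lifts this to a classical $C^{2}$ solution, so (\ref{ModBessel}) holds pointwise on $(0, \infty)\setminus\N$.

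For uniqueness, let $v := w_1 - w_2$ be the difference of two solutions in $\calD$ to (\ref{ModBessel}); then $v \in \calD$, $v|_{\N} = 0$, and $v$ solves the homogeneous ODE on each interval. On any bounded inner interval $(k, k+1)$ with $k \geq 1$, multiply the ODE by $2x^{2} v$ and integrate by parts: the boundary term $[x^{2} v v']_k^{k+1}$ vanishes because $v(k) = v(k+1) = 0$, leaving $\int_k^{k+1}[(v')^{2} + 2\lambda v^{2}]x^{2}\,dx = 0$ and hence $v \equiv 0$ on that interval. On the boundary interval $(0, 1)$, I use the substitution $v(x) = f(x)/x$, which reduces the equation to $f'' = 2\lambda f$ and gives $v(x) = x^{-1}\bigl(A\cosh(\sqrt{2\lambda}\,x) + B\sinh(\sqrt{2\lambda}\,x)\bigr)$; the admissibility condition $\int_0^1 (v')^{2} x^{2}\,dx < \infty$ forces $A = 0$ (otherwise $v'$ carries a non-integrable $1/x^{2}$-singularity at $0$), and $v(1) = 0$ together with $\sinh(\sqrt{2\lambda}) \neq 0$ then forces $B = 0$.

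The only delicate point is the boundary interval $(0, 1)$: a straight energy identity would require knowing $x^{2}v(x)v'(x) \to 0$ as $x \downarrow 0$, which is not automatic for an arbitrary $v \in \calD$. Exploiting the explicit solvability of the modified Bessel equation at index $\nu = 1/2$ (the basis solutions being $\sinh$ and $\cosh$ divided by $x$) isolates the admissible branch in $\calD$ cleanly and so circumvents this subtlety.
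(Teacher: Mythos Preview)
Your proof is correct and follows the same path as the paper's: test the $\calE_\lambda$-orthogonality of $P_\lambda u$ against $C_c^\infty$ functions supported in a single interval $(k,k+1)$ to obtain the ODE in the distributional sense, then invoke interior regularity for a linear ODE with smooth coefficients, and read off the boundary values on $\N$ from $u-P_\lambda u\in\ker J$. The paper argues the last point by noting that $J$ is closed, hence $\ker J$ is closed and $(\ker J)^{\perp\perp}=\ker J$; you assert the decomposition directly, which amounts to the same thing.

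The only real difference is in the uniqueness part: the paper dismisses it as ``easy'' and omits it, whereas you supply a full argument. Your energy identity on the inner intervals $(k,k+1)$, $k\geq 1$, is clean and legitimate (the equation is regular there, so solutions extend smoothly to the closed interval and the boundary terms vanish by $v(k)=v(k+1)=0$). Your explicit handling of the singular interval $(0,1)$ via the substitution $v=f/x$, identifying the $\cosh$-branch as the one excluded by the finiteness of $\int_0^1 (v')^2 x^2\,dx$, is a genuinely useful addition: it makes precise why one cannot simply run the same energy argument down to $x=0$, and it anticipates the computation of $B_0=0$ that the paper carries out in Lemma~\ref{Coefficients}.
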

\begin{proof} Let $u\in D(J)$ and $\lambda>0$.  Regarding the definition of $P_\lambda u$ we obtain
 $\calE_\lambda( P_\lambda u, v) =0 $ for every $v$ in $\mathcal{C}_c^{\infty}((0,\infty)\setminus \N)$, which is equivalent to
\begin{equation}
\int_0^\infty (P_\lambda u)'(x)v'(x) x^{2}\,dx + \lambda\int_0^\infty (P_\lambda u)(x) v(x) 2 x^2 \, dx = 0. \  \forall v \in \mathcal{C}_c^{\infty}((0,\infty)\setminus \N).
\end{equation}
Hence
\begin{align}
-\frac{1}{2}(P_\lambda u)'' -\frac{1}{x}(P_\lambda u)' +\lambda P_\lambda u & = 0 \quad\text{ in the sense of distributions in } \ (0,\infty)\setminus\N.
\label{Distribution}
\end{align}
Being solution of an ODE, with smooth coefficients on  $(0,\infty)\setminus\N$ we conclude that $P_\lambda u\in \mathcal{C}^{\infty}((0,\infty)\setminus \N)$ and hence equation (\ref{Distribution}) is fulfilled pointwise on $(0,\infty)\setminus\N$.\\
As for the boundary condition we have $ u-P_\lambda u \in Ker(J)^{\perp\perp}$. Since $J$ is a closed operator then its kernel $Ker(J)$ is also closed and hence $ u-P_\lambda u \in Ker(J)$. This implies that $Ju = JP_\lambda u$ and hence $u= P_\lambda u \ \mu-a.e. $ on $\N$.
The proof of the converse is easy so we omit it.
\end{proof}
The differential equation given in (\ref{ModBessel}) is in fact equivalent to
$$
(P_\lambda u)'' +\frac{2}{x}(P_\lambda u)' -2\lambda P_\lambda u  = 0,
$$
which is nothing else but a modified Bessel differential equation. Hence, the general solution of the latter equation is given by ( see \cite[p. 362, Eq. 9.1.52]{Abramowitz} )
\begin{eqnarray}
\text{for all} \ k\in \N,  P_\lambda u(x) = x^{- 1/2 }A_{k}\ I_{1/2}(x\sqrt{2\lambda})+  x^{-1/2}B_{k}\ K_{1/2}(x\sqrt{2\lambda}),\ \mbox{in} \ [k,k+1],
   \label{GenSolution}
\end{eqnarray}
and
\begin{eqnarray}
P_\lambda u(x) = x^{- 1/2 }A_{0}\ I_{1/2}(x\sqrt{2\lambda})+  x^{-1/2}B_{0}\ K_{1/2}(x\sqrt{2\lambda}),\ \mbox{in} \ (0,1].
\end{eqnarray}

Here $A_{k},B_{k}$ are real constants to be adjusted according to the boundary conditions and $I_{1/2}, K_{1/2}$ are the  modified Bessel functions given by
$$
I_{\frac{1}{2}}(x\sqrt{2\lambda})= \sqrt{\frac{2}{\pi x\sqrt{2\lambda}}}\sinh (x\sqrt{2\lambda})\ \text{and}\ \  K_{1/2}(x\sqrt{2\lambda})= \sqrt{\frac{\pi}{2 x\sqrt{2\lambda}}} e^{-x\sqrt{2\lambda}},
$$
For later computations we set $M_k$ the matrix
\begin{align}
 M_k := M_k(\lambda) := \left(\begin{array}{cc}
\frac{I_{1/2}(k\sqrt{2\lambda})}{k^{ 1/2 }}  &   \frac{ K_{1/2}(k\sqrt{2\lambda})}{k^{ 1/2 }}  \\
\\
\frac{I_{1/2}((k+1)\sqrt{2\lambda})}{(k+1)^{ 1/2 }} &  \frac{ K_{1/2}((k+1)\sqrt{2\lambda})}{(k+1)^{ 1/2 }}
\end{array}\right),\ \forall\,k\in\N.
\end{align}
An elementary computation leads to evaluate the determinant of $M_k$:
\begin{align}
det(M_k) = - \frac{\sinh(\sqrt{2\lambda})}{\sqrt{2\lambda}\ k(k+1) }.
\label{Deter}
\end{align}
Hence $M_k$ is invertible for each $\lambda>0$.
\begin{lem}
\begin{enumerate}
\item It holds $B_0=0$, $A_0 = \frac{u(1)}{I_{1/2}(\sqrt{2\lambda})}$.
\item For each $k\in\N$ it holds
$$
A_k =  \frac{1}{det(M_k)} \left( u(k)\  \frac{ K_{1/2}((k+1)\sqrt{2\lambda})}{(k+1)^{ 1/2 }} - u(k+1)\  \frac{ K_{1/2}(k\sqrt{2\lambda})}{k^{ 1/2 }}\right),
$$
and
$$
B_k =  \frac{1}{det(M_k)} \left( u(k+1)\   \frac{I_{1/2}(k\sqrt{2\lambda})}{k^{ 1/2 }} - u(k)    \frac{I_{1/2}((k+1)\sqrt{2\lambda})}{(k+1)^{ 1/2 }}  \right).
$$
\end{enumerate}
\label{Coefficients}
\end{lem}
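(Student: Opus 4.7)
The plan is to exploit the general solution formula (\ref{GenSolution}) together with the boundary data $P_\la u(k)=u(k)$ derived in Lemma \ref{Projection}. On each bounded interval $[k,k+1]$ with $k\in\N$ we have two unknown constants $A_k,B_k$ and two linear boundary conditions coming from the values at the two endpoints, which will reduce everything to solving a $2\times 2$ linear system. On the leftmost interval $(0,1]$ we only have one endpoint condition at $x=1$, so to pin down both constants we need one additional piece of information, which will come from the requirement $P_\la u\in\calD$.

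For part (1) I would first examine the behavior of the two basis functions near $x=0$. Using the closed form
\[
K_{1/2}(x\sqrt{2\la})=\sqrt{\tfrac{\pi}{2x\sqrt{2\la}}}\,\e^{-x\sqrt{2\la}},
\]
one sees that $x^{-1/2}K_{1/2}(x\sqrt{2\la})$ behaves like $c\, x^{-1}$ as $x\downarrow 0$, so its derivative behaves like $-c\,x^{-2}$ and the Dirichlet integrand $\bigl((P_\la u)'(x)\bigr)^2 x^{2}$ blows up like $x^{-2}$, which is not integrable at $0$. The function $x^{-1/2}I_{1/2}(x\sqrt{2\la})$, by contrast, is smooth and bounded near $0$ since $\sinh(x\sqrt{2\la})\sim x\sqrt{2\la}$. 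Since $P_\la u\in\calD$ has finite energy by definition of the projection, this forces $B_0=0$. Having eliminated the singular mode, the sole remaining condition $P_\la u(1)=u(1)$ determines $A_0=u(1)/I_{1/2}(\sqrt{2\la})$.

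For part (2), fix $k\in\N$ and impose the continuity conditions $P_\la u(k)=u(k)$ and $P_\la u(k+1)=u(k+1)$ in the representation (\ref{GenSolution}). This produces the linear system
\[
M_k\begin{pmatrix}A_k\\ B_k\end{pmatrix}=\begin{pmatrix}u(k)\\ u(k+1)\end{pmatrix}.
\]
By (\ref{Deter}), $\det M_k\neq 0$ for every $\la>0$, so $M_k$ is invertible and Cramer's rule (or direct inversion of a $2\times 2$ matrix) immediately yields the stated formulas for $A_k$ and $B_k$.

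The only nontrivial step I anticipate is the rigorous justification of $B_0=0$: one must verify that the asymptotic expansion of $K_{1/2}$ near the origin indeed produces an energy contribution $\int_0^{1}((P_\la u)'(x))^2 x^2\,dx=+\infty$ whenever $B_0\neq 0$, which in turn requires checking that the singular mode cannot be cancelled by adjusting $A_0$ (which is clear because $x^{-1/2}I_{1/2}(x\sqrt{2\la})$ remains bounded at $0$, so the two modes are asymptotically independent). The rest of the argument is essentially a routine application of the boundary data and linear algebra, and the converse direction—checking that any function built from the prescribed $A_k,B_k$ satisfies (\ref{ModBessel}) and the interpolation condition on $\N$—is straightforward.
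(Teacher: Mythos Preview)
Your proposal is correct and follows essentially the same route as the paper: eliminate $B_0$ by the singular behavior of $x^{-1/2}K_{1/2}$ at the origin, read off $A_0$ from the value at $x=1$, and for $k\in\N$ solve the $2\times2$ system $M_k(A_k,B_k)^T=(u(k),u(k+1))^T$ via Cramer's rule. Your justification of $B_0=0$ through the divergence of the energy integral is in fact slightly more precise than the paper's, which simply asserts that $P_\la u$ ``should be bounded near $0$''; your argument works directly from $P_\la u\in\calD$ without needing pointwise boundedness.
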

\begin{proof}
{\em The case} $k=0$. Since the function $x^{-1/2}K_{1/2}$ is singular at $0$ whereas $P_\lambda u$ should be bounded near $0$ we obtain $B_0=0$. The computation of $A_0$ is easy. Indeed, we have
$$  P_\lambda u(x) = A_0 \frac{I_{1/2}(x \sqrt{2\lambda})}{x^{1/2}},\  \text{in\  }   [0,1] .    $$
Taking into account the boundary condition at $x=1$, we get the desired result.\\
{\em The case}  $k\in\N$. To determine the coefficients $A_k$ and $B_k$ we have to adapt the general solution given by (\ref{GenSolution}) to the boundary conditions of Lemma \ref{Projection}. Namely,
taking the boundary conditions in (\ref{ModBessel}) into account we derive
\begin{align*}
P_\lambda u(k)&=  A_{k}\ \frac{I_{1/2}(k\sqrt{2\lambda})}{k^{1/2 }}+ B_{k}\ \frac{K_{1/2}(k\sqrt{2\lambda})}{k^{1/2 }}=u(k)\\
P_\lambda u(k+1) &=  A_{k}\ \frac{I_{1/2}((k +1) \sqrt{2\lambda})}{(k +1) ^{1/2 }}+  B_{k}\ \frac{K_{1/2}((k +1)\sqrt{2\lambda})}{(k +1) ^{1/2 }}=u(k+1).
\end{align*}
The latter linear system is equivalent to

$$  M_k\left(\begin{array}{c} A_k\\
\\
B_k
\end{array}\right)=   \left(\begin{array}{c} u(k)\\
\\
u(k+1)
\end{array}\right),$$
which leads to the formula to be proved.

\end{proof}
\begin{lem}
For every $u\in\calD$ and every $\lambda>0$, it holds
\begin{eqnarray}
\check{\calE_\lambda}[Ju]&=&\sum_{k=0}^{\infty} \left[ (P_\lambda u)'((k+1)^-) u(k+1)(k+1)^2 - (P_\lambda u)'(k^+) u(k) k^2\right] \nonumber\\
& = &
\sum_{k=1}^{\infty} \Big( -\frac{ u(k)u(k+1)  \sqrt{2\lambda} \ k(k+1) }{  \sinh(\sqrt{2\lambda})} + \frac{ u(k+1)^2 \ (k+1)^2 \sqrt{2\lambda} \cosh(\sqrt{2\lambda}) }{ \sinh(\sqrt{2\lambda}) }\nonumber\\
&& - u(k+1)^2 (k+1)
- u(k+1)u(k) k(k+1)  \frac{\sqrt{2\lambda}}{\sinh(\sqrt{2\lambda})}\nonumber\\
&&+(\frac{ k^2  \sqrt{2\lambda}}{  \sinh(\sqrt{2\lambda})} \cosh(\sqrt{2\lambda})  + k )u(k)^2\Big) + \frac{\sqrt{2\lambda} \ u(1)^2  \ I_{3/2}(\sqrt{2\lambda})}{I_{1/2}(\sqrt{2\lambda})}.
\end{eqnarray}
\label{ApproxForm}

\end{lem}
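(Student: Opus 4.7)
The plan is to start from the defining identity $\ccalE_\la[Ju]=\calE_\la[P_\la u]$ and split the integral over $(0,\infty)$ as a sum over the slabs $(0,1)$ and $(k,k+1)$, $k\in\N$, on which $P_\la u$ is smooth and satisfies the ODE from Lemma~\ref{Projection}. On each slab I would integrate the gradient piece by parts:
\begin{equation*}
\int_k^{k+1}(P_\la u)'(x)^2 x^2\,dx=\bigl[(P_\la u)'(x)(P_\la u)(x)x^2\bigr]_k^{k+1}-\int_k^{k+1}\bigl((P_\la u)'(x) x^2\bigr)'(P_\la u)(x)\,dx,
\end{equation*}
and rearranging the ODE gives the key identity $((P_\la u)' x^2)'=2\la x^2 P_\la u$, so the remaining volume integral cancels exactly the mass term $\la\int_k^{k+1}(P_\la u)^2\cdot 2x^2\,dx$ appearing in $\calE_\la$. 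Summing over $k\in\N_0$ and inserting $P_\la u(k)=u(k)$ for $k\in\N$ produces the first claimed identity, provided the left boundary at $x=0$ does not contribute.

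For that vanishing I would appeal to Lemma~\ref{Coefficients}: on $(0,1]$ one has $P_\la u(x)=u(1)\sinh(x\sqrt{2\la})/(x\sinh\sqrt{2\la})$, which is bounded at $0$ with derivative $O(x)$, so $x^2(P_\la u)(x)(P_\la u)'(x)\to 0$ as $x\downarrow 0$. Absolute convergence of the tail of the series follows from $\calE_\la[P_\la u]\le\calE_\la[u]<\infty$ for $u\in\calD$.

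For the second equality I would reparametrize $P_\la u$ on each $[k,k+1]$ with $k\ge 1$ in the basis best adapted to the Dirichlet conditions at the endpoints:
\begin{equation*}
P_\la u(x)=\frac{1}{x}\bigl(\alpha_k\sinh(\omega(x-k))+\beta_k\sinh(\omega(k+1-x))\bigr),\qquad \omega:=\sqrt{2\la}.
\end{equation*}
This is only a change of basis inside the $I_{1/2},K_{1/2}$-span of (\ref{GenSolution}), but it diagonalizes the boundary values: $P_\la u(k)=u(k)$ forces $\beta_k=ku(k)/\sinh\omega$ and $P_\la u(k+1)=u(k+1)$ forces $\alpha_k=(k+1)u(k+1)/\sinh\omega$. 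A direct application of the quotient rule at the two endpoints yields $(P_\la u)'(k^+)$ and $(P_\la u)'((k+1)^-)$ in closed form; assembling the summand $(P_\la u)'((k+1)^-)u(k+1)(k+1)^2-(P_\la u)'(k^+)u(k)k^2$ and collecting terms reproduces the five summands of the statement. The isolated term $\sqrt{2\la}\,u(1)^2 I_{3/2}(\sqrt{2\la})/I_{1/2}(\sqrt{2\la})$ is the $k=0$ contribution $(P_\la u)'(1^-)u(1)$, computed from the explicit formula on $(0,1]$ together with the standard identity $I_{3/2}(z)/I_{1/2}(z)=\coth z-1/z$.

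The only genuine obstacle is algebraic bookkeeping: tracking the boundary values on each slab and making the cross-terms $u(k)u(k+1)$ and the diagonal terms $u(k)^2$, $u(k+1)^2$ combine into the stated shape. Working in the $\sinh$-basis above rather than in the $(A_k,B_k)$ coordinates of Lemma~\ref{Coefficients} avoids inverting the matrices $M_k$ and makes the arithmetic routine.
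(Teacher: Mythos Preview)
Your argument is correct. The first identity is obtained exactly as in the paper: split $\calE_\la[P_\la u]$ into integrals over the slabs $(k,k+1)$, integrate by parts, and use the ODE $((P_\la u)'x^2)'=2\la x^2 P_\la u$ to cancel the mass term, leaving only the boundary contributions; the vanishing at $x=0$ is handled the same way.

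For the second identity you take a genuinely different route. The paper stays in the $(A_k,B_k)$-coordinates of Lemma~\ref{Coefficients}, which means inverting the matrix $M_k$ via the determinant (\ref{Deter}), then invoking the differentiation rules $(x^{-\nu}I_\nu)'=mx^{-\nu}I_{\nu+1}$, $(x^{-\nu}K_\nu)'=-mx^{-\nu}K_{\nu+1}$ together with the Wronskian $W[K_{1/2},I_{1/2}]=1/z$ to extract $(P_\la u)'(k^+)$ and $(P_\la u)'((k+1)^-)$; this is what the paper calls ``a lengthy computation.'' Your reparametrization $P_\la u(x)=x^{-1}\bigl(\alpha_k\sinh(\omega(x-k))+\beta_k\sinh(\omega(k+1-x))\bigr)$ is a legitimate change of basis in the same two-dimensional solution space (since $x^{-1/2}I_{1/2}$ and $x^{-1/2}K_{1/2}$ span $\{x^{-1}e^{\pm\omega x}\}$), and it has the advantage that the boundary conditions decouple: $\alpha_k,\beta_k$ are read off directly without solving a linear system, and the one-sided derivatives follow from a single application of the quotient rule. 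The cost is that you bypass Lemma~\ref{Coefficients} entirely, so that lemma is no longer needed for the present proof; the benefit is a substantially shorter and more transparent computation that avoids all special-function identities except the elementary $I_{3/2}/I_{1/2}=\coth z-1/z$ at the very end.
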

\begin{proof}
Let $u\in\calD,\ \lambda>0$. Making use of Lemma \ref{Projection}, a straightforward computation leads to
\begin{align}
&\check{\calE_\lambda}[Ju]\nonumber =  \calE_\lambda[P_\lambda u]\\\nonumber
& =  \int_{0}^{\infty}((P_\lambda u)'(x))^{2}x^{2} dx+ \lambda \int_{0}^{\infty}(P_\lambda u)^{2}(x)\ 2 x^{2} dx\nonumber\\
& = \sum_{k=0}^\infty \left( \int_{k}^{k+1}((P_\lambda u)')^{2}(x) x^{2} dx+ \lambda \int_{k}^{k+1}(P_\lambda u)^{2}(x) 2 x^{2} dx\right)\nonumber\\
& = \sum_{k =0}^\infty ( \int_{k}^{k+1}\left(-\frac{1}{2}(P_\lambda u)''(x)(P_\lambda u)(x) -\frac{1}{x}(P_\lambda u)'(x)(P_\lambda u)(x)+\lambda (P_\lambda u)^{2}(x)\right)2x^{2} dx\nonumber\\
& +\sum_{k =0}^\infty (P_\lambda u)'(x) (P_\lambda u)(x)\ x^{2}|_{k}^{k+1}\nonumber\\
& =  \sum_{k\in \N} [(P_\lambda u)'((k+1)^- )u(k+1)(k+1)^2-(P_\lambda u)'(k^+) u(k)k^2]+ \frac{\sqrt{2\lambda} \ u(1)^2  \ I_{3/2}(\sqrt{2\lambda})}{I_{1/2}(\sqrt{2\lambda})},
\label{computeEL}
\end{align}
and the first identity of the lemma is proved.\\
Let us prove the second identity of the lemma. Clearly we are led to know $(P_\lambda u)'(k^+)$ and $(P_\lambda u)'((k+1)^-)$ (the right derivative at $k$ and the left derivative at $k+1$). In order to compute $(P_\lambda u)'$ let us recall the well known derivation formulae ( see \cite[p. 376, Eq. 9.6.28]{Abramowitz} )
\begin{eqnarray*}
\frac{\partial }{\partial x}( x^{-\nu}I_{\nu}(m x)) = m   x^{-\nu}I_{\nu+1}(m x),\
   \frac{\partial }{\partial x}( x^{-\nu}K_{\nu}(m x)) =  -m   x^{-\nu}K_{\nu+1}(m x).
\end{eqnarray*}
Having the latter formulae in hands together with the expression of  $P_\lambda u$ from (\ref{GenSolution}) we get
\begin{eqnarray*}
(P_\lambda u)'(x)
& = & A_{k}\frac{\partial }{\partial x}( x^{-1/2}\ I_{1/2}(x\sqrt{2\lambda}))+ B_{k} \frac{\partial }{\partial x}(x^{-1/2}\ K_{1/2}(x\sqrt{2\lambda})) \\
\\
& = & \sqrt{2\lambda}\ \left(   A_{k}\ x^{-1/2}\ I_{3/2}(x\sqrt{2\lambda}) - B_{k}\  x^{-1/2}\ K_{3/2}(x\sqrt{2\lambda})\right),\ \quad\text{ in }\ [k,k+1].
\end{eqnarray*}
We also recall the well known formulae
$$
I_{3/2}(x\sqrt{2\lambda}) = \sqrt{\frac{2}{\pi\ x\sqrt{2\lambda}}}  \left(   \cosh(x\sqrt{2\lambda})- \frac{ \sinh(x\sqrt{2\lambda})}{(x\sqrt{2\lambda})} \right)
$$
and
$$  K_{3/2}(x\sqrt{2\lambda}) =  \sqrt{\frac{2}{\pi\ x\sqrt{2\lambda}}}  \left( 1 + \frac{1}{x\sqrt{2\lambda}} \right) e^{-x\sqrt{2\lambda}}.
$$
Let us define the Wronskian of two modified  Bessel functions as follows
\begin{equation}
 W[K_{\nu}(x) ,  I_{\nu}(x)] :=  I_{\nu}(x) K_{\nu +1 } (x) +    I_{\nu +1}(x) K_{\nu}(x) =  \frac{1}{x}, \ x>0.
\end{equation}
Then, for all  $x , \lambda >0 $ we have
$$
 W[K_{1/2}(x\sqrt{2\lambda}) ,  I_{1/2}(x\sqrt{2\lambda})] = \frac{1}{x\sqrt{2\lambda}}.
$$
A lengthy computation leads to
\begin{eqnarray*}
(P_\lambda u)'(k^+)
& = & \frac{\sqrt{2\lambda}}{k^{1/2}}\ \left(   A_{k}\ \ I_{3/2}(k\sqrt{2\lambda}) - B_{k}\ K_{3/2}(k\sqrt{2\lambda})\right)\\
& = & \frac{\sqrt{2\lambda}\ I_{3/2}(k\sqrt{2\lambda})}{k^{1/2}\det(M_k)} \left(  u(k)\  \frac{ K_{1/2}((k+1)\sqrt{2\lambda})}{(k+1)^{ 1/2 }} - u(k+1)\  \frac{ K_{1/2}(k\sqrt{2\lambda})}{k^{ 1/2 }}\right) \\
&& -\frac{\sqrt{2\lambda}\ K_{3/2}(k\sqrt{2\lambda})}{k^{1/2}\det(M_k)} \left(  u(k+1)\   \frac{I_{1/2}(k\sqrt{2\lambda})}{k^{ 1/2 }} - u(k)    \frac{I_{1/2}((k+1)\sqrt{2\lambda})}{(k+1)^{ 1/2 }}  \right)\\
& = & \frac{\sqrt{2\lambda}\ u(k)}{k^{1/2}\det(M_k) (k+1)^{ 1/2 }}. \frac{1}{ \sqrt{2\lambda}\ k^{1/2}(k+1)^{1/2}}
\left( \cosh(\sqrt{2\lambda}) + \frac{\sinh(\sqrt{2\lambda})}{k \ \sqrt{2\lambda}} \right) \\
& &  -\frac{\sqrt{2\lambda}\ u(k+1)}{k^{1/2}\det(M_k) k^{ 1/2 }}.  \frac{1}{  k \ \sqrt{2\lambda}}\\
& = & -\frac{  \sqrt{2\lambda}\ u(k)}{  \sinh(\sqrt{2\lambda})}\left( \cosh(\sqrt{2\lambda})  + \frac{ \sinh(\sqrt{2\lambda})}{ k \ \sqrt{2\lambda}} \right) + \frac{  u(k+1) \sqrt{2\lambda}\ (k+1)}{ k \sinh(\sqrt{2\lambda})}\\
& = &  \frac{ u(k+1) (k+1)}{ k } \frac{\sqrt{2\lambda}}{\sinh(\sqrt{2\lambda})}-\left(\frac{  \sqrt{2\lambda}}{  \sinh(\sqrt{2\lambda})} \cosh(\sqrt{2\lambda})  + \frac{1}{ k} \right)u(k).
\end{eqnarray*}
Finally we obtain,
$$
(P_\lambda u)'(k^+) u(k) k^2  =
 u(k+1)u(k) k(k+1)  \frac{ \sqrt{2\lambda}}{ \sinh(\sqrt{2\lambda})}-k\ u(k)^2 \left(\frac{  k  \sqrt{2\lambda}\cosh(\sqrt{2\lambda})}{  \sinh(\sqrt{2\lambda})}   +1 \right).
$$
Similarly we get
\begin{eqnarray*}
(P_\lambda u)'((k+1)^-)
 = -\frac{ u(k)  \sqrt{2\lambda} \ k }{ (k+1)\  \sinh(\sqrt{2\lambda})} +  \frac{  u(k+1)\ \sqrt{2\lambda} \cosh(\sqrt{2\lambda}) }{   \sinh(\sqrt{2\lambda}) } - \frac{  u(k+1)}{ (k+1)\  }.
\end{eqnarray*}
and
\begin{align*}
 (P_\lambda u)'((k+1)^-)u(k+1)(k+1)^2
 & = k(k+1) u(k)u(k+1) \frac{  \sqrt{2\lambda} }{   \sinh(\sqrt{2\lambda})} \\
 & +  u(k+1)^2 (k+1) \left( \frac{(k+1)\ \sqrt{2\lambda} \cosh(\sqrt{2\lambda}))}{\sinh(\sqrt{2\lambda})}-1\right).
\end{align*}
Substituting in  (\ref{computeEL}) we get
\begin{eqnarray}
\check{\calE_\lambda}[Ju]&=&\sum_{k\geq 1} (P_\lambda u)'((k+1)^-) u(k+1)(k+1)^2 - (P_\lambda u)'(k^+) u(k) k^2\nonumber\\
  &+& \frac{\sqrt{2\lambda} \ u(1)^2  \ I_{3/2}(\sqrt{2\lambda})}{I_{1/2}(\sqrt{2\lambda})}\nonumber\\
& = &
\sum_{k\geq 1}\Big( -\frac{ u(k)u(k+1)  \sqrt{2\lambda} \ k(k+1) }{  \sinh(\sqrt{2\lambda})} + \frac{ u(k+1)^2 \ (k+1)^2 \sqrt{2\lambda} \cosh(\sqrt{2\lambda}) }{ \sinh(\sqrt{2\lambda}) }\nonumber\\
&& - u(k+1)^2 (k+1)
- u(k+1)u(k) k(k+1)  \frac{\sqrt{2\lambda}}{\sinh(\sqrt{2\lambda})}\nonumber\\
&&+(\frac{ k^2  \sqrt{2\lambda}}{  \sinh(\sqrt{2\lambda})} \cosh(\sqrt{2\lambda})  + k )u(k)^2\Big) + \frac{\sqrt{2\lambda} \ u(1)^2  \ I_{3/2}(\sqrt{2\lambda})}{I_{1/2}(\sqrt{2\lambda})},
\end{eqnarray}
and the proof is finished.
\end{proof}

We are in position now to compute the trace from $\ccalE$ through an approximation procedure as explained above.
\begin{theo}
For every $u=(u_k)\in\ \ell^2(\N,\mu)$, set
$$
Q[u]=\sum_{k=1}^\infty k(k+1)( u_{k+1} - u_k )^2.
$$
\begin{enumerate}
\item The trace form $\ccalE$ is the closure of $Q$ restricted to $\ran J$.
\item Assume that  $\mu(\N)=\infty$. Then
\begin{align*}
\check{\calD}:= D(\ccalE)=\{ u=(u_k)\in\ell^2(\N,\mu),\  Q[u]<\infty\},\ \ccalE[u]=Q(u),\ \forall\,u\in \check{\calD}.
\end{align*}
\end{enumerate}
\label{TraceForm}
\end{theo}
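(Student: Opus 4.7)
The plan is to compute the Mosco limit $\ccalE=\lim_{\lambda\downarrow 0}\ccalE_\lambda$ directly from the formula in Lemma~\ref{ApproxForm} and then identify its domain under the hypothesis $\mu(\N)=\infty$.

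\textbf{Algebraic reorganisation.} I would introduce $c(\lambda):=\sqrt{2\lambda}/\sinh(\sqrt{2\lambda})$ and $d(\lambda):=\sqrt{2\lambda}\coth(\sqrt{2\lambda})$, both tending to $1$ as $\lambda\downarrow 0$. Grouping the six terms in the $k$-th summand of Lemma~\ref{ApproxForm} and invoking the identity
\[
    \bigl((k+1)v-kw\bigr)^{2} = k(k+1)(v-w)^{2} + (k+1)v^{2} - k w^{2},
\]
the bulk sum rewrites as
\[
    \sum_{k=1}^{\infty} \Bigl\{ c(\lambda)\,k(k+1)(u_{k+1}-u_{k})^{2} + \bigl(c(\lambda)-1\bigr)\bigl[(k+1)u_{k+1}^{2} - k u_{k}^{2}\bigr] + \bigl(d(\lambda)-c(\lambda)\bigr)\bigl[k^{2} u_{k}^{2} + (k+1)^{2} u_{k+1}^{2}\bigr] \Bigr\},
\]
plus the boundary contribution $\sqrt{2\lambda}\,u(1)^{2}\,I_{3/2}(\sqrt{2\lambda})/I_{1/2}(\sqrt{2\lambda})$. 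Taylor expansion shows $c(\lambda)-1$ and $d(\lambda)-c(\lambda)$ are $O(\lambda)$, and the small-argument asymptotics $I_{\nu}(x)\sim(x/2)^{\nu}/\Gamma(\nu+1)$ render the boundary term $O(\lambda)$.

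\textbf{Pointwise limit.} For any $u\in D(J)$ such that $Ju$ has finite support in $\N$, all three inner series reduce to finite sums, and termwise passage to the limit gives $\ccalE_{\lambda}[Ju]\to Q[Ju]$. Because $(\ccalE_{\lambda})_{\lambda}$ is monotone by \cite[Thm.~2.1]{BBST}, the Mosco limit coincides with the pointwise limit on a core; and as a Mosco limit $\ccalE$ is always closed. Since the finitely supported sequences lie in $\ran J$ and are $\calE_{1}$-dense in $D(J)$, this yields $\ccalE[Ju]=Q[Ju]$ on $\ran J$ and identifies $\ccalE$ with the closure of $Q|_{\ran J}$, which is assertion~(1).

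\textbf{Identification of the domain.} Set $\widetilde\calD:=\{u\in\ell^{2}(\N,\mu):Q[u]<\infty\}$. The pair $(Q,\widetilde\calD)$ is a closed Dirichlet form on $\ell^{2}(\N,\mu)$ (the weighted-graph Dirichlet form on the path $\N$ with edge weights $k(k+1)$). To prove $\check\calD=\widetilde\calD$ it suffices to approximate any $u\in\widetilde\calD$ by a sequence from $\ran J$ in the form norm. I would choose cut-offs $\phi_{n}\from\N\to[0,1]$ equal to $1$ on $\{1,\dots,n\}$, vanishing outside $\{1,\dots,n^{2}\}$, interpolating logarithmically so that $\sum_{k} k(k+1)\bigl(\phi_{n}(k+1)-\phi_{n}(k)\bigr)^{2}\to 0$. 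The bound $|u_{k}|\leq |u_{1}|+\sqrt{Q[u]}$, which follows from $\sum_{k}1/(k(k+1))=1$ and Cauchy--Schwarz, together with a Leibniz-rule expansion yields $Q[u-u\phi_{n}]\to 0$, while $\|u-u\phi_{n}\|_{\ell^{2}(\mu)}\to 0$ by dominated convergence; each $u\phi_{n}$ lifts to a compactly supported piecewise-linear element of $\calD$, hence lies in $\ran J$.

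\textbf{Main obstacle.} The delicate point is the density step, because a naive sharp truncation leaves an uncontrolled edge contribution $n(n+1)u(n+1)^{2}$ that need not vanish. The logarithmic cut-off repairs this, and the hypothesis $\mu(\N)=\infty$ is essential here: it excludes nonzero constants from $\ell^{2}(\N,\mu)$, which would otherwise lie in $\widetilde\calD$ but be unreachable from $\ran J$ since $\calD\subset L^{2}(I,m)$ forces decay at infinity.
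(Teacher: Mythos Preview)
Your treatment of part~(1) follows the paper's route in spirit: compute the pointwise limit of $\ccalE_\lambda$ and invoke \cite[Theorem~2.4]{BBST} to identify $\ccalE$ with the closure of the limit form on $\ran J$. Your algebraic reorganisation is more explicit than the paper's one-line appeal to ``monotone convergence for series'', but the passage from ``$\ccalE_\lambda[Ju]\to Q[Ju]$ on finitely supported $Ju$'' to ``$\ccalE_0=Q$ on all of $\ran J$'' is left implicit; it can be justified by the bound $Q[Jv]\le\calE[v]$ (Cauchy--Schwarz on each edge) together with your $\calE_1$-density claim, but as written the step is asserted rather than argued.

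For part~(2) the paper takes an entirely different route: it recognises $Q$ as a graph Dirichlet form with nearest-neighbour weights $b(k,j)=\tfrac12 kj$, checks the local-finiteness condition $\tilde L(C_c(\N))\subset C_c(\N)$ together with condition~(A) of \cite{Keller-Lenz}, and then invokes \cite[Theorem~6]{Keller-Lenz} to conclude that the maximal domain $\{u\in\ell^2(\mu):Q[u]<\infty\}$ coincides with $D(\ccalE)$ when $\mu(\N)=\infty$. Your direct cut-off argument is a reasonable alternative strategy, but it contains a genuine gap.

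The claim that a logarithmic cut-off on $\{n,\dots,n^2\}$ satisfies $\sum_k k(k+1)\bigl(\phi_n(k+1)-\phi_n(k)\bigr)^2\to 0$ is false: with $|\phi_n(k+1)-\phi_n(k)|\asymp 1/(k\log n)$ one gets a sum of order $\sum_{k=n}^{n^2}(\log n)^{-2}\asymp n^2/(\log n)^2\to\infty$. In the Leibniz expansion the relevant term is $\sum_k k(k+1)\bigl(\phi_n(k+1)-\phi_n(k)\bigr)^2 u_k^2$, and your bound $|u_k|\le|u_1|+\sqrt{Q[u]}$ (mere boundedness) is too weak to control it. What is missing is the sharper decay $|u_k|^2\le Q[u]/k$: since $\sum_{j\ge k}1/(j(j+1))=1/k$, Cauchy--Schwarz shows $(u_k)$ is Cauchy; the hypothesis $\mu(\N)=\infty$ together with $u\in\ell^2(\mu)$ then forces $\lim_{k\to\infty}u_k=0$, whence $|u_k|^2=|u_k-u_\infty|^2\le Q[u]\sum_{j\ge k}1/(j(j+1))=Q[u]/k$. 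With this estimate the cross-term becomes $\lesssim Q[u](\log n)^{-2}\sum_{k=n}^{n^2}1/k\asymp Q[u]/\log n\to 0$, and your argument goes through. Thus the role of $\mu(\N)=\infty$ is not only to exclude constants, as you say in the final paragraph, but to produce the quantitative decay without which the cut-off construction fails.
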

\begin{proof}
1. Let $u\in\calD$. Letting $\lambda\downarrow 0$, we obtain by using the monotone convergence theorem for series
\begin{eqnarray}
\ccalE_0[Ju]:=\lim_{\lambda\downarrow 0}\check{\calE_\lambda}[Ju]&=&\sum_{k=1}^\infty k(k+1)  \left( u(k+1) -  u(k) \right)^2.
\end{eqnarray}
It is easy to check that the limit form $\ccalE_0$ with domain $\ran J$ is closable in $\ell^2(\N,\mu)$. Hence by \cite[Theorem 2.4]{BBST}, $\ccalE$ is the closure of  $\ccalE_0$. Observing that $\ccalE_0=Q|_{\ran J}$ leads to the claim.\\
Let  us write
\[
k\sim j\Leftrightarrow |k-j|=1,
\]
and
\[
b(k,j)= \frac{1}{2}k j,\ \text{ if }\ k\sim j\ \text{ and } b(k,j)=0\ \text{ otherwise}.
\]
Then
\begin{align}
 \ccalE_0[Ju]=\sum_{k\in\N}\sum_{k\sim j}b(k,j)(u(k) - u(j))^2,\ \forall\,u\in\calD.
 \label{Beurling}
\end{align}
2. Now assume $\mu(\N)=\infty$. From formula (\ref{Beurling}), we infer $b(k,k+1)>0$ for all $k$ and $b(k,j)=0$ for $|k-j|>1$. Thereby, condition (A) from \cite{Keller-Lenz} is fulfilled. Moreover $\tilde L (C_c(\N))\subset C_c(\N)$ where
\[
\tilde L u(k) := \frac{1}{a_k} \sum_{j} b(k,j) (u(k)-u(j)), \ \text{for each}\ k\in\N.
\]
Hence assertion 2. is a corollary of \cite[Theorem 6]{Keller-Lenz}.
\end{proof}
\subsection{Global properties of $\ccalE$}

\begin{theo}
The trace form $\ccalE$ is  irreducible and transient.
\end{theo}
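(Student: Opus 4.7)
The plan is to handle the two properties separately, since they rest on quite different ideas. For transience I intend to invoke the abstract fact, recalled in the introduction, that transience of a regular Dirichlet form is inherited by any trace with respect to a smooth measure (Fukushima, Lemma 6.2.2 of \cite{Fukushima}). Since $\calE$ was shown to be transient in the previous proposition, the trace form $\ccalE$ is transient and there is nothing further to do.

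For irreducibility I plan to exploit the explicit Beurling--Deny representation (\ref{Beurling}) of $\ccalE$ obtained in Theorem \ref{TraceForm}. The underlying weighted graph has vertex set $\N$, edges $\{k,k+1\}$ for $k\geq 1$, and strictly positive weights $b(k,k+1)=k(k+1)/2$; in particular it is a connected graph (a path). I will then argue by contradiction. Suppose $A\subset\N$ is a $T_t$-invariant set with both $A$ and $A^c$ nonempty. Using connectedness, pick $k_0\geq 1$ with $k_0\in A$ and $k_0+1\in A^c$ (swap the roles if necessary). Apply the bilinear form obtained by polarizing $Q$ to $u=\mathbf{1}_{\{k_0\}}$ and $v=\mathbf{1}_{\{k_0+1\}}$; both are in $\ccalD$ since they are finitely supported and $\mu$ assigns positive mass to every integer. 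A direct evaluation from (\ref{Beurling}) gives $\ccalE(u,v)=-k_0(k_0+1)\neq 0$. But $\mathbf{1}_A u=u$ and $\mathbf{1}_{A^c}v=v$, so invariance of $A$ forces $\ccalE(u,v)=\ccalE(\mathbf{1}_A u,\mathbf{1}_{A^c}v)=0$, a contradiction. Hence $A=\emptyset$ or $A=\N$.

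Alternatively, the same conclusion could be deduced by quoting the graph-theoretic characterization of irreducibility in \cite{Keller-Lenz} (already invoked in part 2 of Theorem \ref{TraceForm}), since the nearest-neighbour graph on $\N$ with all weights positive is connected. The only mildly subtle point is to phrase the invariance assumption correctly; once that is settled the calculation is a one-liner, and I do not anticipate any further obstacle.
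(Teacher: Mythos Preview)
Your argument is correct on both points, and the irreducibility half is essentially the paper's own proof: you pick adjacent vertices lying in $A$ and $A^c$ and obtain a contradiction from the explicit nearest-neighbour weights. The only cosmetic difference is that the paper uses a single test function $u=\mathbf{1}_{\{N,N+1\}}$ and checks that $\ccalE[u]\neq \ccalE[\mathbf{1}_Y u]+\ccalE[\mathbf{1}_{Y^c}u]$, whereas you polarize and show $\ccalE(\mathbf{1}_{\{k_0\}},\mathbf{1}_{\{k_0+1\}})=-k_0(k_0+1)\neq 0$; these are equivalent one-line computations.

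For transience you take a genuinely different route. The paper does \emph{not} invoke the abstract inheritance result from \cite{Fukushima}; instead it gives a direct verification by producing an explicit reference function. From the Strauss-type estimate (\ref{Strauss}) one has $k\,|u(k)|^2\leq c\,\calE_\lambda[P_\lambda u]$, and with $g(k)=e^{-k}/a_k$ one deduces $\int_\N |Ju|\,g\,d\mu\leq c\sqrt{\ccalE_\lambda[Ju]}$; passing to the limit $\lambda\downarrow 0$ and using that $\ran J$ is a core yields the transience inequality for $\ccalE$. Your appeal to the general lemma is shorter and perfectly legitimate (the paper itself flags this fact in the introduction), while the paper's hands-on proof has the virtue of being self-contained and of exhibiting an explicit $g\in\ell^1(\mu)$ witnessing transience; it also records the pointwise bound $|u(k)|\leq c\,k^{-1/2}\sqrt{\ccalE[u]}$, which is reused in the subsequent decay and compact-embedding results.
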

\begin{proof}
{\em Irreducibility}. Assume there is a set $\emptyset\neq Y\subset\N$ such $Y\neq\N$ and $Y$ is invariant. Then there is $N\in\N$ such that $N\in Y$ and $N+1\not\in Y$. Set $u=\mathbf{1}_{\{N,N+1\}}$. Then $u, u\mathbf{1}_Y$  and $u\mathbf{1}_{Y^c}$ are in $\ccalD$ (in fact they are in $\ran J$ because they all have finite supports). The irreducibility of $Y$ should yield $\ccalE[u]=\ccalE[u\mathbf{1}_Y ] +\ccalE[ u\mathbf{1}_{Y^c}]$. However, $\ccalE[u\mathbf{1}_Y ]=2N^2,\ \ccalE[ u\mathbf{1}_{Y^c}]=2N^2+3N$ and $\ccalE[u]=2N^2+2N\neq \ccalE[u\mathbf{1}_Y ] +\ccalE[ u\mathbf{1}_{Y^c}]$. Thus $\ccalE$ is irreducible.\\
{\em Transience}. Let $u\in\calD$. Using inequality (\ref{Strauss}) we obtain
\begin{align}
(P_\lambda u (k))^2= (u(k))^2\leq \frac{c}{k}\calE_\lambda[P_\lambda u],\ \forall\,k\in\N.
\label{PointK}
\end{align}
Let $g:\N\to (0,\infty),\ g(k)=e^{-k}/{a_k}$. Then $g>0$ and $g\in \ell^1(\N,\mu)$. Using inequality (\ref{PointK}) we get
\begin{align}
\int_{\N} |P_\lambda u(k)|g(k)\,d\mu&=\int_{\N} |u(k)|g(k)\,d\mu= \sum_{k\geq 1} |u(k)|g(k)a_k\nonumber\\
&\leq c\sqrt{\calE_\lambda[P_\lambda u]}.
\end{align}
The latter inequality reads
\begin{align}
\int_{\N} |Ju (k)|g(k)\,d\mu\leq c\sqrt{\ccalE_\lambda[Ju]},\ \forall\,u\in\calD.
\end{align}
Thus letting $\lambda\downarrow 0$ and taking into account that $\ran J$ is a form core for $\ccalE$ we achieve the inequality
\begin{align}
\int_{\N} |u (k)|g(k)\,d\mu\leq c\sqrt{\ccalE[u]},\ \forall\,u\in\ccalD.
\end{align}
Thereby $\ccalE$ is transient, and the proof is finished.
%
\end{proof}
Let us now discuss conservativeness of $\ccalE$.
\begin{theo}
\label{NSC-BesselCons}
\begin{enumerate}
\item Assume that $\mu$ is finite. Then the  trace form of the Bessel process is not conservative.
\item Assume that $\mu$ is infinite. Then the  trace form of the Bessel process is conservative if and only if
\begin{eqnarray}
\sum_{k=1}^\infty \frac{a_k}{k} = \infty.
\end{eqnarray}
\label{BesselCons}
\end{enumerate}
\end{theo}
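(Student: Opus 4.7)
The plan is to recognize that the trace form $\ccalE$ is the Dirichlet form of a reversible birth-death chain on $\N$ and to invoke the classical non-explosion criterion of Reuter. From Theorem~\ref{TraceForm} the generator on $C_c(\N)$ acts by $\tilde L u(k)=a_k^{-1}\sum_{j\sim k} b(k,j)(u(k)-u(j))$, which gives birth rate $\lambda_k:=k(k+1)/(2a_k)$ and death rate $\mu_k:=k(k-1)/(2a_k)$ for $k\ge 2$, with $\mu_1=0$ since $1$ has no left neighbour in $\N$. The Keller--Lenz framework already invoked in Theorem~\ref{TraceForm} identifies the $L^2$-semigroup generated by $\ccalE$ with the minimal chain semigroup, so conservativeness of $\ccalE$ is equivalent to non-explosion of that chain.

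For part~2 (infinite $\mu$) I would apply Reuter's criterion in the form
\[
\text{non-explosion}\;\Longleftrightarrow\;\sum_{k=1}^\infty R_k=\infty,\qquad R_k:=\frac{1}{\lambda_k}+\sum_{j=1}^{k-1}\frac{\mu_k\mu_{k-1}\cdots\mu_{k-j+1}}{\lambda_k\lambda_{k-1}\cdots\lambda_{k-j}}.
\]
The key algebraic observation is $\mu_m/\lambda_m=(m-1)/(m+1)$; the product $\prod_{l=0}^{j-1}\mu_{k-l}/\lambda_{k-l}$ telescopes to $(k-j)(k-j+1)/[k(k+1)]$, and combined with $1/\lambda_{k-j}=2a_{k-j}/[(k-j)(k-j+1)]$ this reduces every summand to $2a_{k-j}/[k(k+1)]$. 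Hence $R_k=\tfrac{2}{k(k+1)}\sum_{i=1}^k a_i$, and a Fubini swap together with $\sum_{k\ge i}1/[k(k+1)]=1/i$ gives $\sum_{k\ge 1}R_k=2\sum_{i\ge 1}a_i/i$, which yields the claimed equivalence.

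For part~1 (finite $\mu$) I would give a conceptual argument based on the transience of $\ccalE$ proved in the preceding theorem. If $\ccalE$ were conservative and $\mu(\N)<\infty$, then $\mathbf 1\in\ell^2(\N,\mu)$ combined with $T_t\mathbf 1=\mathbf 1$ places $\mathbf 1$ in $\ccalD$ with $\ccalE[\mathbf 1]=0$ (standard semigroup argument: $\lim_{t\to 0}(\mathbf 1-T_t\mathbf 1)/t=0$ in $L^2$, so $\mathbf 1\in D(L)\subset\ccalD$). Transience, however, makes $(\ccalD_e,\ccalE)$ a Hilbert space, so $\ccalE[u]=0$ forces $u=0$, a contradiction. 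As a sanity check, part~1 is also a direct consequence of part~2, since $\mu(\N)<\infty$ trivially implies $\sum_k a_k/k<\infty$.

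The step I expect to be most delicate is ensuring that Reuter's criterion genuinely applies to the semigroup generated by the form $\ccalE$ and not merely to an abstract $Q$-matrix. This rests on identifying $\ccalE$ with the closed form of the minimal birth-death chain, which is precisely what the Keller--Lenz appeal in Theorem~\ref{TraceForm} accomplishes (its hypothesis~(A) having been verified there). Once this identification is granted, the rest is the routine telescoping reduction sketched above.
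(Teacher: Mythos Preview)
Your proof is correct, and for part~1 it coincides with the paper's argument: both exploit that on a finite measure space a conservative Markovian semigroup would force $\mathbf 1$ into the domain with zero energy, contradicting transience. (Your ``sanity check'' that part~1 follows from part~2 is not fully justified, since Theorem~\ref{TraceForm}(2) establishing $\ccalE=Q^{\max}$ uses the hypothesis $\mu(\N)=\infty$; but this is harmless, as your main argument for part~1 is the transience one.)

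For part~2 the two proofs are close cousins rather than genuinely different. The paper invokes the Keller--Lenz criterion that conservativeness is equivalent to the absence of nontrivial bounded solutions of $(\tilde L+\alpha)u=0$, then analyses that difference equation by hand: it derives the recursion
\[
u_{k+1}-u_k=\frac{2\alpha}{k(k+1)}\sum_{j=1}^k a_j u_j+\text{(lower order)},
\]
shows monotonicity, and proves that boundedness of the solution is governed by $\sum_k \tfrac{1}{k(k+1)}\sum_{j\le k}a_j$, which the same Fubini swap you use turns into $\sum_k a_k/k$. Your route packages the difference-equation analysis into Reuter's classical non-explosion series $\sum_k R_k$ for birth--death chains and then computes $R_k=\tfrac{2}{k(k+1)}\sum_{i\le k}a_i$ by the telescoping identity $\mu_m/\lambda_m=(m-1)/(m+1)$. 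The telescoping you exploit is precisely what produces the paper's recursion, so the two computations are algebraically the same; you simply quote Reuter where the paper re-derives it in this special case. What your approach buys is brevity and a transparent link to the classical birth--death theory; what the paper's buys is self-containment, since no external explosion criterion beyond the Keller--Lenz bounded-solution test is invoked. The bridge you correctly flag as delicate---identifying the $\ccalE$-semigroup with the minimal chain semigroup---is handled identically in both proofs via the $\mu(\N)=\infty$ hypothesis and Theorem~\ref{TraceForm}(2).
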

\begin{proof}

{\em Case 1:} $\mu$ is finite. In this situation conservativeness and recurrence are equivalent. However we have already proved that $\ccalE$ is transient and then it is not recurrent and hence not conservative.\\
{\em Case 2:} $\mu$ is infinite. In this situation we use \cite[Theorem 6.1]{Keller-Lenz} ($Q=Q^{max}$ if the measure is infinite) to conclude that the conservativeness of $\ccalE$ is equivalent to the fact that the equation
\begin{eqnarray}
\tilde{L}u + \alpha u=0,\ \alpha>0,\ u\in l^\infty,
\end{eqnarray}
has no nontrivial bounded  solution. Here $u=(u_k)$. We rewrite
\begin{eqnarray}
\tilde{L}u(k) + \alpha u(k)=\frac{1}{a_k}\sum_{j} b(k,j)(u_k - u_j) + \alpha u_k=0.
\end{eqnarray}
This leads to,
\begin{eqnarray}
 u_2= (1 + \alpha a_1)u_1,
\end{eqnarray}
and
\begin{eqnarray}
\frac{1}{2a_k}k(k+1)(u_k - u_{k+1}) + \frac{1}{2a_k}k(k-1)(u_k - u_{k-1}) + \alpha u_k=0,\ \forall\,k\geq 2.
\end{eqnarray}
Thus by induction we get
\begin{eqnarray*}
u_{k+1} - u_k &=&   \frac{k-1}{k+1}(u_k - u_{k-1}) + 2\frac{a_k\alpha}{k(k+1)} u_k\nonumber\\
&=& \frac{k-1}{k+1}\big(\frac{k-2}{k}(u_{k-1} - u_{k-2}) + 2\frac{a_{k-1}\alpha}{k(k-1)} u_{k-1}\big)
+ 2\frac{a_k\alpha}{k(k+1)} u_k\nonumber\\
&=& \frac{(k-1)(k-2)}{k(k+1)}(u_{k-1} - u_{k-2}) +  2\frac{a_{k-1}\alpha}{k(k+1)} u_{k-1}
+ 2\frac{a_k\alpha}{k(k+1)} u_k\nonumber\\
&=& \frac{(k-1)(k-2)}{k(k+1)}\big( \frac{k-3}{k-1} (u_{k-2} - u_{k-3}) +\cdots\big) +  2\frac{a_{k-1}\alpha}{k(k+1)} u_{k-1}\nonumber\\
&+& 2\frac{a_k\alpha}{k(k+1)} u_k,\ \forall\,k\geq 2.
\end{eqnarray*}
Finally we obtain the recursive formula
\begin{eqnarray}
u_{k+1} - u_k &=& \frac{u_2 - u_1}{k(k+1)} + \frac{2\alpha}{k(k+1)}\sum_{j=1}^k a_j u_j\nonumber\\
&=& \frac{\alpha}{k(k+1)}a_1u_1 + \frac{2\alpha}{k(k+1)}\sum_{j=1}^k a_j u_j
,\ \forall\,k\geq 2.
\label{difference}
\end{eqnarray}
The latter formula leads to the following two observations (which can be proved by induction):\\
1. $u_k$ has the sign of $u_1$ for all $k\geq 1$. This is if $u_1>0$, then $u_k>0,\ \forall\ k\geq 1$ and if  $u_1<0$, then $u_k<0,\ \forall\ k\geq 1$.\\
2. The sequence $(u_k)$ is monotone, depending on the sign of $u_1$.\\
Thus by linearity we may assume, without loss of generality, that $u_1>0$. In this case $(u_k)$ is strictly monotone increasing.\\
Accordingly, making use of formula (\ref{difference}) we derive
\begin{eqnarray}
u_{k+1} - u_k \leq\big( \frac{\alpha a_1}{k(k+1)} + \frac{2\alpha}{k(k+1)}\sum_{j=1}^k a_j\big)u_k
,\ \forall\,k\geq 2,
\label{domination1}
\end{eqnarray}
and
\begin{eqnarray}
\frac{ u_{k+1}}{u_k} \leq 1 + \frac{\alpha a_1}{k(k+1)} + \frac{2\alpha}{k(k+1)}\sum_{j=1}^k a_j
,\ \forall\,k\geq 2.
\end{eqnarray}
Finally we achieve
\begin{eqnarray}
u_{N+1}\leq u_2\prod_{k=1}^{N+1} \big(1 + \frac{\alpha a_1}{k(k+1)} + \frac{2\alpha}{k(k+1)}\sum_{j=1}^k a_j\big)
\end{eqnarray}
Obviously the latter product is finite provided $\sum_{k=1}^\infty \frac{1}{k(k+1)}\sum_{j=1}^k a_j<\infty$ and then we get a bounded non-zero solution.\\
Conversely Assume that $\sum_{k=1}^\infty \frac{1}{k(k+1)}\sum_{j=1}^k a_j=\infty$. Then summing over $k$ in formula (\ref{difference}) and having in mind that $(u_k)$ is increasing we obtain
\begin{eqnarray}
u_{N+1}- u_1 = a_1u_1\alpha\sum_{k=1}^N \frac{1} {k(k+1)} +
2\alpha\sum_{k=1}^N \frac{1}{k(k+1)}\sum_{j=1}^k a_j u_j.
\label{Sum}
\end{eqnarray}
Hence
$$
u_{N+1}\geq 2\alpha a_1\sum_{k=1}^N \frac{1}{k(k+1)}\sum_{j=1}^k a_j\to\infty \mbox{ as}\ N\to\infty.
$$
Finally an elementary computation show that $\sum_{k=1}^\infty \frac{1}{k(k+1)}\sum_{j=1}^k a_j = \sum_{k\geq 1} \frac{a_k}{k}$, which finishes the proof.
\end{proof}
\begin{rk}
{\rm
Let us emphasize that for this special case condition (14) of \cite{GrigoryanStochGraph} is not fulfilled. It also does not fit in the framework of  \cite[Section 5]{MasamuneConserv}.
}
\end{rk}
We close this section by analyzing global properties of elements from $\ccalD$ as well as compactness of the embedding of $\ccalD$ into $\ell^p(\N,\mu)$.
Obviously $\ccalD\subset \ell^\infty$. We shall perform this observation by establishing decay property for elements from $\ccalD$ as $k\to\infty$. Let us quote that such result enables us to describe decay of eigenfunctions of $\check{H}$.
\begin{prop}
There is  a finite constant $c>0$ such that
$$
|u_k|\leq \frac{c}{k^{1/2}}(\ccalE[u])^{1/2},\ \forall\,u=(u_k)\in\ccalD.
$$
\end{prop}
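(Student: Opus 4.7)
The plan is to invoke the generalized Strauss inequality of Theorem~\ref{Ineq} at the critical exponent $\sigma=1$, transport it through the projection $P_\lambda$ to obtain a pointwise bound on $Ju$, and then pass to the limit $\lambda\downarrow 0$ together with a density argument to cover all of $\ccalD$. The resulting constant $c$ will be inherited from the Strauss inequality at $n=3$, $\sigma=1$.

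Fix $u\in\calD$ and $\lambda>0$. For $n=3$ and $\sigma=1$, the Strauss inequality (\ref{Strauss}) reduces to $t^{1/2}|v(t)|\leq c\,(\calE[v])^{1/2}$ for every $v\in\calD$. Applied to $v=P_\lambda u$ at the integer point $t=k$ and combined with $P_\lambda u(k)=u(k)$ it gives exactly the pointwise estimate (\ref{PointK}):
\begin{equation*}
|u(k)|^{2} \;=\; |P_\lambda u(k)|^{2} \;\leq\; \frac{c}{k}\calE[P_\lambda u] \;\leq\; \frac{c}{k}\calE_\lambda[P_\lambda u] \;=\; \frac{c}{k}\ccalE_\lambda[Ju].
\end{equation*}
Letting $\lambda\downarrow 0$ and using the monotonicity $\ccalE_\lambda\downarrow\ccalE_0$ together with the identity $\ccalE_0[Ju]=\ccalE[Ju]$ on the form core $\ran J$ (a closable form agrees with its closure on its original domain), we obtain $|(Ju)(k)|^{2}\leq \frac{c}{k}\ccalE[Ju]$ for every $u\in\calD$.

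To promote the inequality to an arbitrary $u\in\ccalD$, I use that $\ran J$ is an $\ccalE_{1}$-form core for $\ccalE$, as established in Theorem~\ref{TraceForm}(1). Pick a sequence $(u^{(m)})\subset\calD$ with $Ju^{(m)}\to u$ in the $\ccalE_{1}$-norm; then $\ccalE[Ju^{(m)}]\to\ccalE[u]$ and $Ju^{(m)}\to u$ in $\ell^{2}(\N,\mu)$. Because every $a_k>0$, $\ell^{2}(\N,\mu)$-convergence forces pointwise convergence at each fixed $k\in\N$ via the trivial estimate $|(Ju^{(m)})(k)-u_{k}|^{2}\leq a_{k}^{-1}\|Ju^{(m)}-u\|_{\ell^{2}(\N,\mu)}^{2}$. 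Passing to the limit $m\to\infty$ in the inequality at each $Ju^{(m)}$ yields the announced decay. No step presents a genuine obstacle; the only mildly delicate point is the identification $\ccalE_{0}[Ju]=\ccalE[Ju]$ on $\ran J$ together with the decreasing character of $\lambda\mapsto\ccalE_\lambda[Ju]$ as $\lambda\downarrow 0$, which prevents the right-hand side from deteriorating in the limit.
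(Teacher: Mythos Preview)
Your proof is correct and follows essentially the same route as the paper: apply the Strauss inequality (\ref{Strauss}) at $\sigma=1$ to $P_\lambda u$, use $P_\lambda u(k)=u(k)$ and $\calE[P_\lambda u]\leq\calE_\lambda[P_\lambda u]=\ccalE_\lambda[Ju]$, let $\lambda\downarrow 0$, and then extend from the core $\ran J$ to all of $\ccalD$. The paper leaves the last density step implicit, whereas you spell it out (including the pointwise convergence at each $k$ via $a_k>0$), but the argument is the same.
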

\begin{proof}
Let $u\in\calD$. Using inequality (\ref{Strauss}), with $\sigma=1$, we get
$$
k^{1/2} \ |P_\lambda u(k)| =  k^{1/2} \ |u(k)| \leq c\  (\calE[ P_\lambda u])^{1/2}\leq c\  (\calE_\lambda [ P_\lambda u])^{1/2},\  \forall k\in\N,\ \lambda>0.
$$
Thus,
$$
|(Ju)(k)| \leq \frac{c}{k^{1/2}}\  (\ccalE_\lambda [J u])^{1/2},\  \forall k\in\N ,\ \forall\,u\in\calD.            $$
Letting $\lambda \downarrow 0$, and using the fact that $\ran J$ is a core for $\ccalE$, we obtain
$$
|u_k|\leq \frac{c}{k^{1/2}}(\ccalE[u])^{1/2},\  \forall k\in\N ,\ \forall\,u=(u_k)\in\ccalD,
$$
and the proof is finished.

\end{proof}
Let us stress that on the light of the latter proposition the decay behavior is preserved under taking the trace.
\begin{prop}
Let $p\in [1,\infty)$ be such that
$$
\sum_{k\in\N} \frac{a_k}{k^{p/2}}<\infty.
$$
Then $\ccalD$ embeds compactly into $l^p(\N,\mu)$.

\end{prop}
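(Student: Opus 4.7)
The strategy is a standard tail-plus-finite compactness argument driven entirely by the pointwise decay estimate of the previous proposition. Let $(u^{(n)})_n$ be a sequence bounded in $(\ccalD,\ccalE_1^{1/2})$, and set $C:=\sup_n (\ccalE[u^{(n)}])^{1/2}<\infty$. By the preceding proposition, $|u_k^{(n)}|\leq cC/k^{1/2}$ for all $n,k$, so the scalar sequence $(u_k^{(n)})_n$ is bounded in $\R$ for each fixed $k$. A Cantor diagonal extraction produces a subsequence (not relabeled) that converges pointwise on $\N$ to some $u=(u_k)$, inheriting the same bound $|u_k|\leq cC/k^{1/2}$.

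For the tail, this pointwise bound gives, for every $N\in\N$,
\begin{align*}
\sum_{k>N}|u_k^{(n)}-u_k|^p a_k
&\leq 2^p\sum_{k>N}\bigl(|u_k^{(n)}|^p+|u_k|^p\bigr)a_k \\
&\leq 2^{p+1}c^pC^p\sum_{k>N}\frac{a_k}{k^{p/2}}.
\end{align*}
By the standing hypothesis $\sum_{k\geq 1}a_k/k^{p/2}<\infty$, given $\varepsilon>0$ one may choose $N=N(\varepsilon)$ so that the right-hand side is $<\varepsilon/2$, uniformly in $n$. On the remaining finite block $\{1,\dots,N\}$, termwise pointwise convergence (bounded by $(2cC)^p a_k/k^{p/2}$) yields $\sum_{k\leq N}|u_k^{(n)}-u_k|^p a_k\to 0$ as $n\to\infty$. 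Adding the two contributions shows $(u^{(n)})$ is Cauchy in $\ell^p(\N,\mu)$, hence convergent there; this is precisely the compactness of the embedding $\ccalD\hookrightarrow\ell^p(\N,\mu)$.

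No serious obstacle is expected: the summability condition $\sum a_k/k^{p/2}<\infty$ is tailored exactly to convert the $k^{-1/2}$ pointwise decay into a uniform tail estimate in the weighted $\ell^p$-norm, after which diagonal extraction plus termwise convergence on a finite set finishes the job. The only points deserving brief verification are that $u\in\ell^p(\N,\mu)$ (immediate from the same decay bound and the summability hypothesis) and that the $\ell^p$-limit of the subsequence coincides with its pointwise limit $u$, which is automatic by uniqueness of limits on each coordinate.
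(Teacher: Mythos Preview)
Your argument is correct and follows essentially the same route as the paper: both proofs combine the pointwise decay $|u_k|\leq c\,k^{-1/2}(\ccalE[u])^{1/2}$ with the summability hypothesis to control the tail uniformly, and then handle the finite block by extracting a convergent subsequence. The only cosmetic difference is the characterization of compactness invoked: you work with the direct sequential criterion (bounded sequence $\Rightarrow$ convergent subsequence) and identify the limit $u$ explicitly, whereas the paper starts from an $\ccalE_1$-weakly null sequence and shows it is strongly null in $\ell^p(\N,\mu)$; the underlying estimates are identical.
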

\begin{proof}
Set $q=p/2$. Let $u=(u_k)\in\ell^2(\N,\mu)$. Then by
\[
\sum_k |u_k|^p a_k \leq c \big(\sum_k \frac{a_k}{k^q}\big)\cdot(\ccalE[u])^q.
\]
Now let $(v_j)=(u_k^{(j)})\subset\ccalD$ which converges $\ccalE_1$-weakly to $0$. From the latter inequality we get
\begin{eqnarray}
\sup_j\|v_j\|_{\ell^p(\N,\mu)}<\infty.
\label{UniformBound}
\end{eqnarray}
Let $\epsilon$. Choose $N\in\N$  large enough so that
$$
\sum_{k\geq N} \frac{a_k}{k^q}<\epsilon.
$$
Owing to the $\ccalE_1$ boundedness of the sequence $(v_j)$ we get
\begin{align}
\|v_j\|_{\ell^p(\N,\mu)}^p &= \sum_{k=1}^{ N -1} |u_k^{(j)}|^p a_k + \sum_{k\geq N} |u_k^{(j)}|^p a_k\nonumber\\
&\leq \sum_{k=1}^{ N -1} |u_k^{(j)}|^p a_k + \big(\sum_{k\geq N }\frac{a_k}{k^q}\big)\cdot(\ccalE[v_j])^q
 \leq \sum_{k=1}^{ N -1} |u_k^{(j)}|^p a_k +C\epsilon.
 \label{Limit1}
\end{align}
From the uniform bound (\ref{UniformBound}), we derive that for every $k=1,\cdots,N-1$ the sequence $(u_k^{(j)})_j$ is uniformly bounded in $\R$ and hence each of then  has a convergence subsequence. Since they are finite in number we may and shall assume without loss of generality that they have a common convergent  subsequence say $(u_{k}^{(j_l)})_l$. As by assumptions $(v_j)$ converges $\ccalE_1$-weakly to zero we obtain
$$
\lim_{l\to\infty} u_{k}^{(j_l)} =0,\ \forall\,k=1,\cdots,N-1.
$$
Thus by (\ref{Limit1}) we get
\[
\limsup_{l\to\infty} \|v_j\|_{\ell^p(\N,\mu)}^p \leq C\epsilon.
\]
As $\epsilon$ is arbitrary, we obtain  $\|v_j\|_{\ell^p(\N,\mu)}\to 0$ and therefore $\ccalD$ embeds compactly into $\ell^p(\N,\mu)$.

\end{proof}
\subsection{The trace on finite sets}
We consider now an atomic measure with finite support:
\begin{align}
\mu=\sum_{k=1}^N a_k\delta_k,\ N\in\N.
\end{align}
Unlike the former case where $\ccalE$ is of pure jump type, in this case we shall show that the trace of $\calE$ on the set $\{1,2,\cdots,N\}$ decomposes into the sum of a nonlocal an a killing Dirichlet forms.\\
Let us first compute $\ccalE$. For this situation
$$
\ker J=\{u\in\calD\colon\,u(k)=0,\ k=1,\cdots,N\}
$$
Let $u\in\calD$ and $\lambda>0$. Then $P_\lambda u$ is the unique function from $\calD$ which solves the differential equation
\begin{align}
    -\frac{1}{2}(P_\lambda u)'' -\frac{1}{x}(P_\lambda u)' +\lambda P_\lambda u & = 0 \quad\text{ in } \ (0,\infty)\setminus\{1,...,N\},\nonumber\\
    P_\lambda u & = u \quad\text{ on } \{1,...,N\},
\end{align}
Hence owing to the decay property of $P_\lambda u$ at infinity, the solution is given by: For each  $k\in \{0,1,...,N-1\}$,
\begin{eqnarray*}
P_\lambda u(x) = x^{- 1/2 }A_{k}\ I_{1/2}(x\sqrt{2\lambda})+  x^{-1/2}B_{k}\ K_{1/2}(x\sqrt{2\lambda}),\ \mbox{in} \ [k,k+1],
\end{eqnarray*}
with $B_0=0$ and
\begin{eqnarray*}
P_\lambda u(x) =   x^{-1/2}B_{N}\ K_{1/2}(x\sqrt{2\lambda}),\ \mbox{in} \ [N,\infty).
\end{eqnarray*}
The constants $A_k,B_k$ has to be determined according to the second condition of the differential equation. Hence for $k=0,1,\cdots,N-1$ constants $A_k, B_k$ are given by Lemma \ref{Coefficients}, whereas $B_N=   \frac{N^{1/2} u(N)}{K_{1/2}(N\sqrt{2\lambda})}$.
\begin{theo}
\begin{enumerate}
\item $\ccalD=\R^N$ and for each $u=(u_1,\cdots,u_N)\in\R^N$ it holds
\begin{align*}
\ccalE[u]=\sum_{k=1}^{N-1} k(k+1)( u_{k+1} - u_k )^2+  N u(N)^2.
\end{align*}
\item $\ccalE$ is transient irreducible.
\item $\ccalE$ is not conservative.
\end{enumerate}
\end{theo}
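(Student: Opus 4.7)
I would repeat the method of Lemma~\ref{ApproxForm} and Theorem~\ref{TraceForm} with the partition $I=[0,1]\cup[1,2]\cup\cdots\cup[N-1,N]\cup[N,\infty)$. Integration by parts of $\calE_\la[P_\la u]$ against the modified Bessel ODE kills all interior contributions and leaves a sum of boundary terms $[(P_\la u)'(x)(P_\la u)(x)x^2]_k^{k+1}$. The boundary at $x=0$ vanishes because $B_0=0$ makes $P_\la u(x)x^2$ of order $x$, and the boundary at $x=+\infty$ vanishes because $K_{1/2}$ decays exponentially. For $k=1,\dots,N-1$ the boundary contributions from $[k,k+1]$ are exactly the ones already analysed in Lemma~\ref{ApproxForm}, whose $\la\downarrow 0$ limit equals $k(k+1)(u_{k+1}-u_k)^2$. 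The contribution from the $k=0$ interval $[0,1]$ is the residual $\sqrt{2\la}\,u(1)^2 I_{3/2}(\sqrt{2\la})/I_{1/2}(\sqrt{2\la})$, which tends to $0$.

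\textbf{The new half-line $[N,\infty)$.} On this piece $P_\la u(x)=x^{-1/2}B_N K_{1/2}(x\sqrt{2\la})$ with $B_N=N^{1/2}u(N)/K_{1/2}(N\sqrt{2\la})$, so the only surviving boundary contribution is $-(P_\la u)'(N^+)u(N)N^2$. Using $\partial_x(x^{-\nu}K_\nu(mx))=-mx^{-\nu}K_{\nu+1}(mx)$ together with the identity $K_{3/2}(y)/K_{1/2}(y)=1+1/y$ I get
\[
(P_\la u)'(N^+)=-\sqrt{2\la}\,u(N)-\frac{u(N)}{N},
\]
whence $-(P_\la u)'(N^+)u(N)N^2=\sqrt{2\la}\,N^2 u(N)^2+Nu(N)^2$, whose $\la\downarrow 0$ limit is $Nu(N)^2$. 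Adding the three groups of contributions gives $\ccalE_0[Ju]=\sum_{k=1}^{N-1}k(k+1)(u_{k+1}-u_k)^2+Nu(N)^2$. Since this quadratic form lives on the finite-dimensional space $\R^N=\ran J$ it is automatically closed, so by \cite[Theorem 2.4]{BBST} one obtains $\ccalD=\R^N$ and $\ccalE$ given by the announced formula.

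\textbf{Global properties.} From $\ccalE[u]\geq Nu_N^2$ I extract a strictly positive functional $g=a_N^{-1}\mathbf{1}_{\{N\}}$ on $\{1,\dots,N\}$ satisfying $\int|u|g\,d\mu=|u_N|\leq c\sqrt{\ccalE[u]}$, proving transience. For irreducibility, assume $\emptyset\neq Y\subsetneq\{1,\dots,N\}$ were invariant; pick consecutive indices $k,k+1$ lying on opposite sides of $Y$ and test the decomposition $\ccalE[u]=\ccalE[u\mathbf{1}_Y]+\ccalE[u\mathbf{1}_{Y^c}]$ on $u=\mathbf{1}_{\{k,k+1\}}$. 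A direct evaluation of the three quadratic forms produces an excess $2k(k+1)>0$ on the right (in both subcases $k+1<N$ and $k+1=N$ the killing contribution $Nu_N^2$ agrees on the two sides, so only the jump terms matter), contradicting invariance. Non-conservativeness is immediate: the constant function $\mathbf{1}\in\ccalD=\R^N$ satisfies $\ccalE[\mathbf{1}]=N>0$; alternatively, $\mu$ being finite, conservativeness is equivalent to recurrence, which is excluded by the transience just established. The one step requiring genuine care is the $[N,\infty)$ computation where the Bessel-function identities must be tracked precisely to isolate the clean killing coefficient $N$ from the mixed $\sqrt{2\la}$-expression.
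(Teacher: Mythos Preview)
Your proof follows the paper's own strategy, which is extremely terse (``Assertions 1.\ and 2.\ can be proved as for the case of infinite support, whereas assertion 3.\ follows from $\ccalE[1]=N\neq 0$''). You supply considerably more detail than the paper does, and your computation of the $[N,\infty)$ boundary contribution correctly isolates the killing coefficient $N$; the irreducibility and non-conservativeness arguments are likewise fine.

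One small slip in the transience step: your witness $g=a_N^{-1}\mathbf{1}_{\{N\}}$ is \emph{not} strictly positive on $\{1,\dots,N\}$, whereas the standard characterization of transience asks for $g>0$ $\mu$-a.e. The repair is immediate in this finite-dimensional setting: from $\ccalE[u]=0$ the killing term forces $u_N=0$ and the jump part forces $u_{k+1}=u_k$ for all $k$, hence $u=0$; thus $\ccalE$ is positive definite on $\R^N$, and equivalence of norms yields $\int_{\{1,\dots,N\}}|u|\,d\mu\le c\sqrt{\ccalE[u]}$, i.e.\ transience with $g\equiv 1$. Alternatively you can mimic verbatim the paper's infinite-support argument via the Strauss inequality~(\ref{Strauss}), which is what the paper's one-line proof is implicitly invoking.
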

\begin{proof}
Assertions 1. and 2. can be proved as for the case of infinite support, whereas assertion 3. follows from the fact $\ccalE[1]=N\neq 0$.

\end{proof}
\section{ The trace of Bessel's Dirichlet form with respect to measures of mixed type}
We consider once again the Dirichlet form  $\calE$ associated to the Bessel operator with $n=3$,  however we shall change the measure. We fix  a measure $\mu$ on $[0,\infty)$ of  mixed  type, i.e. a measure which has an absolutely continuous part and a discrete part. Precisely,
$$
\mu = \mu_{ac} + \mu_{disc},
$$
with
\begin{eqnarray}
\mu_{ac}= x^2\mathbf{1}_{(0,1)} dx,\ \mu_{disc}=\sum_{k=1}^{\infty}a_k \delta_k,\ a_k>0,\ \forall\,k\in\N.
\end{eqnarray}
Then the support $F$ of $\mu$ is  $F= [0,1]\cup\{ k,\ k\in \N \}$.\\
Let us compute the trace of $\calE$ w.r.t. the measure $\mu$. In this case the operator $J$ is defined as usual through
$$
J: \calD\cap L^2 (F,\mu) \to  L^2 (F,\mu),\ Ju= u_{|_{F}}.
$$
Then
\[
Ker(J):= \{ u\in\calD:\ u_{|_{(0,1)}}=0,\ u(k)=0, \ \forall k\in \N \},
\]
and for each $\lambda>0$, $P_\lambda u$ is the solution of
\begin{align}
    -\frac{1}{2}(P_\lambda u)'' -\frac{1}{2x}(P_\lambda u)' +\lambda P_\lambda u & = 0 \quad\text{ in } \ \bigcup_{k=1}^{\infty} (k,k+1),\nonumber\\
    P_\lambda u & = u \quad\text{ on } (0,1)\cup \N.
\end{align}
Thus for each integer $k$ the solution is given by
$$
 P_\lambda u(x) = x^{- 1/2 }A_{k}\ I_{1/2}(x\sqrt{2\lambda})+  x^{-1/2}B_{k}\ K_{1/2}(x\sqrt{2\lambda}),\ \mbox{in} \ [k,k+1],
$$
where $A_{k}$ and $B_{k}$ are two real constants to be determined.\\
We first us compute $\check{\calE_\lambda}$.
\begin{lem}
For each $\lambda>0$, it holds
\begin{eqnarray}
\check{\calE_\lambda}[Ju]&=& \int_{0}^{1}(u'(x))^{2}x^{2} dx+\lambda\int_{0}^{1} u^{2}(x) 2 x^{2}dx\nonumber\\
& +& \sum_{k\in \N} [(P_\lambda u)'(k+1) u((k+1)^-)(k+1)^{2}-(P_\lambda u)'(k^+) u(k)k^{2}]
\end{eqnarray}
\end{lem}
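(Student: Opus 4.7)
The plan relies on two simple ingredients. First, the interval $(0,1)$ is contained in the support $F$ of $\mu$, so the defining boundary condition $JP_\lambda u = Ju$ forces $P_\lambda u \equiv u$ pointwise on all of $(0,1)$ (both functions being continuous representatives in their classes). Second, on each open interval $(k,k+1)$ with $k\ge 1$, $P_\lambda u$ satisfies the modified Bessel ODE
\[
-\tfrac{1}{2}(P_\lambda u)''-\tfrac{1}{x}(P_\lambda u)'+\lambda P_\lambda u=0,
\]
as well as being $C^\infty$ there. This is obtained by exactly the same projection argument used in Lemma \ref{Projection}: test the orthogonality $\calE_\lambda(P_\lambda u,\cdot)=0$ against $C_c^\infty((k,k+1))$-functions, all of which lie in $\ker J$ for the present choice of $F=[0,1]\cup\N$.

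With this in hand, I would split the form according to the natural partition of the integration domain:
\[
\calE_\lambda[P_\lambda u]=\int_0^1\!\bigl(((P_\lambda u)')^2x^2+2\lambda(P_\lambda u)^2x^2\bigr)dx+\sum_{k=1}^\infty\int_k^{k+1}\!\bigl(((P_\lambda u)')^2x^2+2\lambda(P_\lambda u)^2x^2\bigr)dx.
\]
On $(0,1)$ the identity $P_\lambda u=u$ turns the first bracket into the first two terms of the announced formula. On each $(k,k+1)$ with $k\ge 1$, I would integrate by parts on the gradient term:
\[
\int_k^{k+1}\!((P_\lambda u)')^2x^2\,dx=\bigl[x^2(P_\lambda u)'(P_\lambda u)\bigr]_k^{k+1}-\int_k^{k+1}\!P_\lambda u\cdot\bigl(x^2(P_\lambda u)''+2x(P_\lambda u)'\bigr)dx,
\]
and substitute the ODE, which gives $x^2(P_\lambda u)''+2x(P_\lambda u)'=2\lambda\, x^2 P_\lambda u$. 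The resulting bulk integral then cancels the $2\lambda$-mass term exactly, leaving only the boundary contribution, which, after using $P_\lambda u(k)=u(k)$ and $P_\lambda u(k+1)=u(k+1)$, matches precisely a summand of the announced boundary sum.

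The only technical care needed is to justify the integration by parts, i.e.\ to know that $(P_\lambda u)'$ has finite one-sided limits at $k^+$ and $(k+1)^-$ for each $k\ge 1$. This is immediate from the explicit representation
\[
P_\lambda u(x)=x^{-1/2}A_k\,I_{1/2}(x\sqrt{2\lambda})+x^{-1/2}B_k\,K_{1/2}(x\sqrt{2\lambda}),\quad x\in[k,k+1],
\]
already recorded just before the lemma: since $k\ge 1$, neither $x^{-1/2}I_{1/2}(x\sqrt{2\lambda})$ nor $x^{-1/2}K_{1/2}(x\sqrt{2\lambda})$ is singular on the closed interval $[k,k+1]$, so $P_\lambda u$ is $C^\infty$ up to the endpoints of each piece. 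I do not foresee any genuine obstacle here; the lemma is essentially a Green-type identity adapted to the mixed support of $\mu$, entirely parallel to the opening step in the proof of Lemma \ref{ApproxForm}.
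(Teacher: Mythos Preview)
Your proposal is correct and follows essentially the same route as the paper: split $\calE_\lambda[P_\lambda u]$ into the $(0,1)$-piece (where $P_\lambda u=u$) and the tail $\sum_{k\ge 1}\int_k^{k+1}$, then integrate by parts on each $(k,k+1)$ and use the ODE to kill the bulk integrals, leaving only the boundary terms. Your justification of the one-sided limits of $(P_\lambda u)'$ via the explicit Bessel representation is in fact slightly more explicit than what the paper writes.
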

\begin{proof}
We have
\begin{align}
\check{\calE_\lambda}[Ju] &=  \calE_\lambda[P_\lambda u]\\
&= \int_{0}^{\infty}((P_\lambda u)'(x))^{2}x^{2}dx+ \lambda \int_{0}^{\infty}(P_\lambda u)^{2}(x) 2 x^{2}dx\nonumber\\
& =  \int_{0}^{1}((P_\lambda u)'(x))^{2}x^{2}\,dx+\int_0^1\lambda(P_\lambda u)^{2}(x) 2 x^{2}dx+\int_{1}^{\infty}((P_\lambda u)')^{2}(x) x^{2}dx\nonumber\\
& + \lambda \int_{1}^{\infty}(P_\lambda u)^{2}(x) 2 x^{2}dx\nonumber\\
&= \sum_{k\in \N} \int_{k}^{k+1}\big( -\frac{1}{2}(P_\lambda u)''(x)(P_\lambda u)(x) -\frac{1}{x}(P_\lambda u)'(x)(P_\lambda u)(x) + \lambda (P_\lambda u)^{2}(x)\big) 2x^{2}dx\nonumber\\
& +  \sum_{k\in \N} (P_\lambda u)'(x)(P_\lambda u)(x)\ x^{2}|_{k}^{k+1} +\int_{0}^{1}(u'(x))^{2}x^{2} dx+\lambda\int_{0}^{1} u^{2}(x) 2 x^{2}dx\nonumber\\
& =  \sum_{k\in \N} (P_\lambda u)'(x)(P_\lambda u)(x)\ x^{2}|_{k}^{k+1}+\int_{0}^{1}(u'(x))^{2}x^{2} dx+\lambda\int_{0}^{1} u^{2}(x) 2 x^{2}dx.\nonumber\\
\end{align}
Finally we get
\begin{eqnarray}
\check{\calE_\lambda}[Ju]&=& \int_{0}^{1}(u'(x))^{2}x^{2} dx+\lambda\int_{0}^{1} u^{2}(x) 2 x^{2}dx\nonumber\\
& +& \sum_{k\in \N} [(P_\lambda u)'((k+1)^-) u(k+1)(k+1)^{2}-(P_\lambda u)'(k^+) u(k)k^{2}]
\end{eqnarray}
\end{proof}
Letting $\lambda \downarrow 0$, we get by using monotone convergence theorem for series
\begin{eqnarray*}
\check{\calE_0}[J u]
& = & \int_{0}^{1}u'(x)x^2 dx +  \sum_{k=1}^\infty k(k+1)( u(k+1) - u(k) )^2 \\
& = & \int_{0}^1(u'(x))^{2} d\mu_{a.c} + \sum_{k=1}^\infty k(k+1)( u(k+1) - u(k) )^2.
\end{eqnarray*}
An elementary computation shows that the latter form is closable.  Regarding the construction of $\ccalE$, we get $\ccalE=\overline{\calE_0}$. In order to obtain a precise description for $\ccalE$ we introduce the space
\begin{align}
\ccalD_{max}&=\{ \psi\in L^2(F,\mu):\, \psi\in AC[0,1],\\
&\int_{0}^1(\psi'(x))^{2}\,d\mu_{ac}+\sum_{k=1}^\infty k(k+1)( \psi(k+1) - \psi(k) )^2<\infty\}
\end{align}
and the quadratic forms $\check{\calE}^{(c)}, \check{\calE}^{(J)}$:
\begin{align*}
\dom \check{\calE}^{(c)}&=\dom\check{\calE}_0^{(J)}=\ccalD_{max},\\ &\check{\calE}^{(c)}[\psi]=\int_{0}^1(\psi'(x))^{2}\,d\mu_{a.c},\
\check{\calE}^{(J)}[\psi]= \sum_{k=1}^\infty k(k+1)( \psi(k+1) - \psi(k) )^2,\ \forall\,\psi\in \ccalD_{max}.
\end{align*}
Finally let
\begin{align*}
\mathcal{Q}:=\dom\mathcal{Q}=\ccalD_{max},\ \mathcal{Q}[\psi]= \check{\calE}^{(c)}[\psi]+\check{\calE}^{(J)}[\psi],\ \forall\,\psi\in\ccalD_{max}.
\end{align*}
\begin{lem}
The quadratic form $\mathcal{Q}$ is closed.
\label{ClosedSum}
\end{lem}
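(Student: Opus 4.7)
The form $\mathcal{Q}$ is the sum of two nonnegative quadratic forms on the common domain $\ccalD_{max}$; the task is to show that $\ccalD_{max}$ is complete under the graph norm $\|\cdot\|_{\mathcal{Q},1}^{2}:=\|\cdot\|_{L^{2}(F,\mu)}^{2}+\mathcal{Q}[\cdot]$. My plan is the direct verification: take a Cauchy sequence $(\psi_{n})\subset\ccalD_{max}$ in this norm and produce a limit $\psi\in\ccalD_{max}$ with $\|\psi_{n}-\psi\|_{\mathcal{Q},1}\to 0$. Since $L^{2}(F,\mu)=L^{2}((0,1),\mu_{ac})\oplus\ell^{2}(\N,\mu_{disc})$ with each atom $a_{k}>0$, the $L^{2}(F,\mu)$-Cauchy property supplies $\psi\in L^{2}(F,\mu)$ together with $\psi_{n}|_{(0,1)}\to\psi|_{(0,1)}$ in $L^{2}((0,1),x^{2}dx)$ and the pointwise convergence $\psi_{n}(k)\to\psi(k)$ for every $k\in\N$.

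\textbf{Identifying the derivative on $(0,1)$.} The sequence $(\psi_{n}')$ is Cauchy in $L^{2}((0,1),x^{2}dx)$ and hence has a limit $g$ there. On any $[a,1]\subset(0,1]$ the weight $x^{2}$ is bounded above and away from zero, so $\psi_{n}\to\psi$ and $\psi_{n}'\to g$ in $L^{2}([a,1],dx)$; the classical $W^{1,2}$-characterization then forces $\psi|_{[a,1]}\in AC([a,1])$ with $\psi'=g$ a.e. Letting $a\downarrow 0$ shows that $\psi$ is (locally) absolutely continuous on $(0,1]$ and $\int_{0}^{1}(\psi')^{2}x^{2}\,dx=\|g\|_{L^{2}((0,1),x^{2}dx)}^{2}<\infty$.

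\textbf{Jump part and graph-norm convergence.} The pointwise convergence $\psi_{n}(k)\to\psi(k)$ together with Fatou's lemma yields
\[
\sum_{k=1}^{\infty}k(k+1)(\psi(k+1)-\psi(k))^{2}\leq\liminf_{n\to\infty}\check{\calE}^{(J)}[\psi_{n}]<\infty,
\]
so $\psi\in\ccalD_{max}$. Applying the same two Fatou arguments to the differences $\psi_{n}-\psi_{m}$ and sending $m\to\infty$ gives $\mathcal{Q}[\psi_{n}-\psi]\leq\liminf_{m\to\infty}\mathcal{Q}[\psi_{n}-\psi_{m}]$, which tends to $0$ by the Cauchy hypothesis. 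Combined with the $L^{2}(F,\mu)$-convergence, this yields $\|\psi_{n}-\psi\|_{\mathcal{Q},1}\to 0$ and closes the argument.

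\textbf{Where the main obstacle lies.} The only subtle point is the behaviour at the origin: because $\mu_{ac}=x^{2}dx$ degenerates at $0$, a locally $AC$ limit on $(0,1]$ need not extend to a function in $AC[0,1]$ in the strict sense (the textbook obstruction is $\psi(x)=\log(1/x)$, which has $\int_{0}^{1}(\psi')^{2}x^{2}dx<\infty$ and $\psi\in L^{2}((0,1),x^{2}dx)$ but no continuous extension to $0$). The scheme above therefore establishes closedness provided one reads the condition ``$\psi\in AC[0,1]$'' in the definition of $\ccalD_{max}$ as \emph{local} absolute continuity on $(0,1]$; this is what the Mosco limit of the projections $P_{\lambda}u$ delivers and is consistent with the fact that elements of $\calD$ are only required to be locally AC on $I$. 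If one insists on strict $AC[0,1]$, an independent argument producing the continuous extension of $\psi$ to $0$ would have to be inserted at the end of the second paragraph.
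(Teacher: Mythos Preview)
Your approach is essentially the same as the paper's: write $\mathcal{Q}$ as the sum of the continuous part $\check{\calE}^{(c)}$ and the jump part $\check{\calE}^{(J)}$, and verify that each is closed on $\ccalD_{max}$. The paper is terser---it invokes the already-established closedness of the discrete form and identifies the weak limit of $(\psi_n')$ with $\psi'$ via distributions---but the content matches your second and third paragraphs.

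Your final paragraph raises a point the paper does not address. As written, $\ccalD_{max}$ requires $\psi\in AC[0,1]$, and your example $\psi(x)=\log(1/x)$ indeed lies in $L^{2}((0,1),x^{2}dx)$ with $\int_{0}^{1}(\psi')^{2}x^{2}\,dx<\infty$, yet fails to extend continuously to $0$. The paper's own closure argument (passing to the distributional derivative) only yields local absolute continuity on $(0,1]$, exactly as yours does; it never produces the extension to $0$. So the gap you flag is present in the paper as well, and your reading---that ``$AC[0,1]$'' should be understood as local $AC$ on $(0,1]$, consistent with the definition of $\calD_0$ on $I=(0,\infty)$---is the natural fix. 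With that interpretation your proof is complete.
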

\begin{proof}
In fact, $\mathcal{Q}$ is the sum of two closed quadratic forms. From the part of the former section we already know that $\check{\calE}^{(J)}$ is closed.\\
Let us show that $\check{\calE}^{(c)}$ is closed. Let $(u_n)\subset\ccalD_{max}$ such that  $ \|u_n - u \|_{L^2 (F,\mu)} \longrightarrow 0$ as $n \to \infty$ and $ \check{\calE}^{(c)}[u_n - u_m]\longrightarrow 0$ as $ n,m\to \infty$. Then $(u'_n)$ is Cauchy sequence in $L^2 ((0,1),\mu_{a.c})$.\\
It is well known that $L^2 ((0,1),\mu_{a.c})$ is a Hilbert space so there exist $v\in L^2 ((0,1),\mu_{a.c})$ \ s.t \ $u'_n \to v $ in \ $L^2 ((0,1),\mu_{a.c})$. Since $ u_n \to u $ in  $L^2 (F,\mu)$ and hence in $L^2((0,1),\mu_{a.c})$, we obtain  $u'_n\to u' $ in the sense of  distribution. Hence  $u' = v$ and  $u\in\ccalD_{max}$, moreover $\check{\calE}^{(c)}[u_n - u]\to 0$.
\end{proof}

\begin{theo}
\begin{enumerate}
\item It holds $\ccalE=\overline{\mathcal{Q}|_{\ran J}}$.
\item If $\mu_{\disc}(\N)=\infty$, then  $\ccalE= \mathcal{Q}$.
\end{enumerate}
\label{TraceMixed}
\end{theo}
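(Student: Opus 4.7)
\emph{Item 1} is essentially immediate from the construction of $\ccalE$. By the recipe of \cite{BBST}, the trace form is the Mosco limit of $(\ccalE_\lambda)$ as $\lambda\downarrow 0$, which equals the closure of the pointwise limit $\ccalE_0 = \lim_{\lambda\downarrow 0}\ccalE_\lambda$ on $\ran J$. The computation preceding the theorem (together with monotone convergence in the expression for $\check{\calE_\lambda}[Ju]$) identifies $\ccalE_0[Ju] = \mathcal{Q}[Ju]$ for every $u\in D(J)$, so $\ccalE_0 = \mathcal{Q}|_{\ran J}$. Since $\mathcal{Q}$ is closed in $L^2(F,\mu)$ by Lemma \ref{ClosedSum}, its restriction to the subspace $\ran J \subset \ccalD_{max}$ is automatically closable with closure contained in $\mathcal{Q}$, whence $\ccalE = \overline{\mathcal{Q}|_{\ran J}}$.

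For \emph{item 2}, since $\mathcal{Q}$ is already closed it suffices to show that $\ran J$ is $\mathcal{Q}_1$-dense in $\ccalD_{max}$. Given $\psi\in\ccalD_{max}$, the plan is a cutoff-and-extension argument. Introduce the piecewise-linear cutoff $\eta_N\from[0,\infty)\to[0,1]$ equal to $1$ on $[0,N]$ and $0$ on $[2N,\infty)$, with linear transition on $[N,2N]$, and set $\psi_N := \eta_N\psi$. Since $\psi_N$ coincides with $\psi$ on $(0,1)$ and vanishes at all integers $k\geq 2N$, taking $u_N := \psi$ on $[0,1]$ and extending by piecewise-linear interpolation between the values $\psi_N(k)$ and $\psi_N(k+1)$ on each $[k,k+1]$ yields a compactly-supported absolutely continuous $u_N\in\calD$ with $Ju_N=\psi_N$, so $\psi_N\in\ran J$. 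Convergence $\psi_N\to\psi$ in $L^2(F,\mu)$ reduces to $\sum_{k>N}a_k\psi(k)^2\to 0$, a tail of a convergent series, and $\check{\calE}^{(c)}[\psi_N-\psi]=0$ since $\psi_N=\psi$ on $(0,1)$. The task therefore reduces to showing $\check{\calE}^{(J)}[\psi_N-\psi]\to 0$.

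Writing $\alpha_N := 1-\eta_N$ and using the discrete product rule
\[
(\alpha_N\psi)(k+1)-(\alpha_N\psi)(k)=\alpha_N(k+1)\bigl(\psi(k+1)-\psi(k)\bigr)+\bigl(\alpha_N(k+1)-\alpha_N(k)\bigr)\psi(k),
\]
the inequality $(a+b)^2\leq 2a^2+2b^2$ bounds $\check{\calE}^{(J)}[\psi_N-\psi]$ by a sum of a tail dominated by $2\sum_{k>N}k(k+1)(\psi(k+1)-\psi(k))^2$, which vanishes, plus a \emph{cross term} bounded by $(2/N^2)\sum_{k=N}^{2N-1}k(k+1)\psi(k)^2$.

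\emph{The main obstacle} is vanishing of this cross term, which is precisely where the hypothesis $\mu_{\disc}(\N)=\infty$ enters. Cauchy-Schwarz applied to the telescoping sum yields $|\psi(k+n)-\psi(k)|^2\leq\frac{1}{k}\check{\calE}^{(J)}[\psi]$, so $(\psi(k))$ is Cauchy with some limit $L$; were $L\neq 0$, divergence of $\sum a_k$ would contradict $\sum a_k\psi(k)^2<\infty$. Hence $\psi(k)\to 0$, and applying the same Cauchy-Schwarz to the tail representation $\psi(N)=-\sum_{k\geq N}(\psi(k+1)-\psi(k))$ yields the refined bound $\psi(N)^2\leq\epsilon_N/N$ with $\epsilon_N := \sum_{k\geq N}k(k+1)(\psi(k+1)-\psi(k))^2\to 0$. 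Since $k(k+1)/N^2\leq 5$ for $k\leq 2N$ and $\epsilon_k\leq\epsilon_N$ for $k\geq N$, the cross term is dominated by $10\,\epsilon_N\sum_{k=N}^{2N-1}(1/k)\leq 10\,\epsilon_N\log 2\to 0$. This closes the approximation and yields item 2.
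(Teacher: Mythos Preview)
Your proof of item~1 matches the paper's exactly: both identify $\ccalE_0=\mathcal{Q}|_{\ran J}$ via the monotone-limit computation, invoke Lemma~\ref{ClosedSum}, and appeal to the trace-form construction.

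For item~2 you take a genuinely different route. The paper argues by recycling Theorem~\ref{TraceForm}(2): since $\mu_{\disc}(\N)=\infty$, the Keller--Lenz criterion already established that the discrete jump form is its own maximal form, and separately $C_c^1[0,1]$ is a core for the continuous piece; the paper then asserts (without writing out the gluing) that these two facts combine to make $\ran J$ a core for $\mathcal{Q}$. Your argument, by contrast, is direct and self-contained: you approximate an arbitrary $\psi\in\ccalD_{\max}$ by compactly supported cutoffs $\psi_N=\eta_N\psi$, verify $\psi_N\in\ran J$ by explicit piecewise-linear extension, and reduce everything to controlling the cross term $(2/N^2)\sum_{k=N}^{2N-1}k(k+1)\psi(k)^2$. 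The key step---deriving the decay $\psi(k)^2\le\epsilon_k/k$ from the Cauchy--Schwarz telescoping bound together with $\sum a_k=\infty$ forcing the limit of $\psi(k)$ to be zero---is exactly where the hypothesis is consumed, and is carried out cleanly. What your approach buys is independence from the Keller--Lenz machinery and an explicit quantitative handle on the approximation; what the paper's approach buys is brevity, since the heavy lifting was already done in the purely discrete section.

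One cosmetic remark: the bound $\sum_{k=N}^{2N-1}1/k\le\log 2$ is not quite right (the sum exceeds $\int_N^{2N}dx/x=\log 2$), but it is certainly bounded by~$1$, so the conclusion is unaffected.
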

\begin{proof}
Assertion 1. follows from Lemma \ref{ClosedSum} together with the fact that $\ccalE_0=Q|_{\ran J}$.\\
2.: It suffices to prove that $\ran J$ is a core for $\mathcal{Q}$.\\
On the one hand, since $\mu_{disc}(\N)=\infty$, we know that $\ran J\cap\ell^2(\mu_{disc})$ is a core for $\ccalE^{(J)}$. On the other one, we have $C_c^1[0,1]\subset\ran J$ is a core for $\ccalE^{(c)}+2\delta_1$ in $L^2([0,1],\mu_{ac})$ which is the trace of $\calE$ w.r.t the measure $\mu_{ac}$. All these considerations lead to the fact that $\ran J$ is a core for $\mathcal{Q}$.
\end{proof}
We quote that $\calE$ is the sum of a Dirichlet form of strongly local type, $\check{\calE}^{(c)}$ and an other one of non-local type, $\check{\calE}^{(J)}$.
\begin{theo}
\begin{enumerate}
 The Dirichlet form $\ccalE$ is transient irreducible.
 \label{Trans2}
\end{enumerate}
\end{theo}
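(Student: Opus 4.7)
The strategy is to handle transience and irreducibility separately, adapting the methods of Section 3 but exploiting the coupling at the junction $x=1$, where the absolute continuity requirement built into the domain identifies $u(1^-)$ from the continuous component with $u(1)$ from the discrete component.

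For transience I would reuse the Section 3 argument. Apply the Strauss inequality (\ref{Strauss}) with $\sigma=1$ to $P_\lambda u$ to obtain $x^{1/2}|P_\lambda u(x)|\le c\sqrt{\ccalE_\lambda[Ju]}$ for every $x>0$. Define a strictly positive weight $g\colon F\to(0,\infty)$ by $g\equiv 1$ on $(0,1)$ and $g(k)=e^{-k}/a_k$ on $\N$; then $g\in L^1(F,\mu)$ since $\int_0^1 x^2\,dx<\infty$ and $\sum_k e^{-k}<\infty$. Splitting $\int_F |Ju|\,g\,d\mu$ into its continuous and discrete pieces and inserting the pointwise bound yields
\[
\int_F |Ju|\,g\,d\mu\le c\sqrt{\ccalE_\lambda[Ju]}\Bigl(\int_0^1 x^{3/2}\,dx+\sum_{k\ge 1}\frac{e^{-k}}{\sqrt{k}}\Bigr)\le C\sqrt{\ccalE_\lambda[Ju]}.
\]
Letting $\lambda\downarrow 0$ and invoking Theorem \ref{TraceMixed} (so that $\ran J$ is a core for $\ccalE$) extends the bound to all $u\in\ccalD$, yielding transience.

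For irreducibility, suppose for contradiction that $Y\subset F$ is $\ccalE$-invariant with $\emptyset\ne Y\ne F$, and write $Y=Y_c\sqcup Y_d$ with $Y_c\subset(0,1)$ and $Y_d\subset\N$. If $Y_c$ is neither $\emptyset$ nor $(0,1)$ modulo Lebesgue null, pick a smooth $u\in\ccalD$ supported in $(0,1)$ that is nonzero at an interior boundary point of $Y_c$; then $u\mathbf{1}_Y$ has an interior jump, so is not in $AC[0,1]$, contradicting $u\mathbf{1}_Y\in\ccalD$. If $Y_d$ contains some $N\ge 2$ with $N+1\notin Y_d$ (or conversely), applying the Section 3 computation to $u=\mathbf{1}_{\{N,N+1\}}\in\ran J$ gives $\ccalE[u\mathbf{1}_Y]+\ccalE[u\mathbf{1}_{Y^c}]\ne\ccalE[u]$.

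The remaining non-trivial configurations all satisfy $Y_c\in\{\emptyset,(0,1)\}$ and $Y_d\cap\{2,3,\ldots\}\in\{\emptyset,\{2,3,\ldots\}\}$, leaving six cases distinguished by these choices and by whether $1\in Y$. Test with any $u\in\ran J$ satisfying $u(1)u(2)\ne 0$. In four of the six cases $Y$ separates the interval $(0,1)$ from the point $1$, in which event $u\mathbf{1}_Y$ is discontinuous at $x=1$ and so fails to be absolutely continuous on $[0,1]$, hence is not in $\ccalD$. In the two remaining configurations $Y=[0,1]$ and $Y=\{2,3,\ldots\}$ the AC condition is preserved, but a direct computation (mirror of the Section 3 algebra) shows $\ccalE[u\mathbf{1}_Y]+\ccalE[u\mathbf{1}_{Y^c}]-\ccalE[u]=4u(1)u(2)\ne 0$, since only the $k=1$ jump term fails to split additively. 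The main subtlety is this last dichotomy: once the AC condition at $x=1$ is enforced, the continuous and discrete components become rigidly glued into a single irreducible structure.
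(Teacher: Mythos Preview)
Your proof is correct and follows essentially the same route as the paper's: the transience argument is nearly identical (the paper takes $g=e^{-x}$ on $(0,1)$ instead of $g\equiv 1$, an immaterial change), and the irreducibility argument has the same architecture of splitting an invariant set into its continuous and discrete parts, invoking the Section~3 computation for discrete gaps, and using the absolute-continuity constraint on $[0,1]$ to rule out the remaining configurations. Your case enumeration for irreducibility is in fact more systematic than the paper's somewhat informal version; the only point to tighten is the phrase ``interior boundary point of $Y_c$'', which is not well-defined for a general measurable set --- replace it by choosing $u$ strictly positive on an open subinterval of $(0,1)$ that meets both $Y_c$ and its complement in positive measure, so that $u\mathbf{1}_Y$ has no continuous representative there.
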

\begin{proof}
{\em Irreducibility}:  Assume that $\ccalE$ is not irreducible. Then, there is an invariant set $\emptyset\neq A \subsetneq F$ s.t $A= X\cup Y$ where $X=\subset (0,1)$ and $\emptyset\neq Y\subset\N$. If $Y=\subsetneq\N$, arguing as in the proof of Theorem 3.6 leads to a contradiction. Thus either $Y$ is empty or $Y=\N$. Suppose $Y=\N$. The function $u=\textbf{1}_{[0,1]}\in \ran J$. Thus we should have $u\textbf{1}_{A},\ \textbf{1}_{A^c}$ are in $\ccalD_{max}$ and $\calE[u]=\calE[u\textbf{1}_{A}] + u\textbf{1}_{A^c}]$. However, since $X\subset(0,1)$, we get $\calE[u\textbf{1}_{A}]=2=\calE[u\textbf{1}_{A^c}]=\calE[u]$ and we are led to a contradiction. Hence $Y=\emptyset$. Finally we achieve $\emptyset\neq A\subset (0,1)$. But then we would get $\textbf{1}_{(0,1)}\in\dom\ccalE$, which is absurd, because the latter function does not have any continuous representative. In any case we get a contradiction and then $\ccalE$ is irreducible.\\
{\em Transience}:  Let $u\in\calD$. Using inequality (\ref{Strauss}) we obtain
\begin{align}
(P_\lambda u (x))^2= (u(x))^2\leq \frac{c'}{x}\calE_\lambda[P_\lambda u],\ \forall\,x\in(0,1).
\label{PointG}
\end{align}
 Let $g:F\to (0,\infty),\ g = {e^{-x}}.\textbf{1}_{(0,1)}+ \frac{e^{-k}}{a_k}. \textbf{1}_{\N}$. Then $g>0$ and $g\in L^1 (F,\mu)$.  Using inequality (\ref{PointK}) and (\ref{PointG}) we get
\begin{align}
\int_{F} |P_\lambda u|g\,d\mu&=\int_0^1 |P_\lambda u(x)|g(x)\,d\mu_{a.c} + \int_{\N} |P_\lambda u(k)|g(k)\,d\mu_{disc}\nonumber\\
&=  \int_0^1 |\psi(x)|g(x)\ x^2 dx +  \sum_{k\geq 1} |u(k)|g(k)a_k \nonumber\\
&\leq C \sqrt{\calE_\lambda[P_\lambda u]} = C \sqrt{\ccalE_\lambda[Ju]},\ \forall\,u\in\calD.
\end{align}
Thus letting $\lambda\downarrow 0$ and taking into account that $\ran J$ is a form core for $\ccalE$ we achieve the inequality
\begin{align}
\int_{F} |u|g\,d\mu\leq c\sqrt{\ccalE[u]},\ \forall\,u\in\ccalD_{max}.
\end{align}
Therefore $\ccalE$ is transient.
\end{proof}
\noindent
To discuss conservation property for the obtained trace form, we shall apply Masammune--Uemura--Wang result, which asserts the following in the abstract frame of metric measure energy space see (\cite[Theorem 1.1]{MasamuneConserv}). Assume that conditions  \textbf{(A1), (A2), (A3)} and \textbf{ (M1), (M2)} (of \cite[Theorem 1.1]{MasamuneConserv}) are fulfilled.  Let $d$ be the metric of $F$. Suppose there is a finite constant $c>0$ such that
\begin{eqnarray}
 \sup_{x\in F} \int_{x\neq y} (1\wedge d(x,y)^2 ) J(x,dy ) \leq c.
 \label{jump-metric}
\end{eqnarray}
Then the condition
\begin{eqnarray}
\liminf_{k\to\infty} \frac{\ln V(k)}{k\ln k}<\infty
\label{volume-conservation}
\end{eqnarray}
yields the conservativeness of the Dirichlet form $\calE^{(c)} +\calE^{(J)}$. Here $V(k)$ is the volume of the ball of radius $k$.\\
For our situation it is elementary to prove that conditions  \textbf{(A1), (A2), (A3)} and \textbf{ (M1), (M2)}. Moreover the jump part of $\ccalE$ is
\[
\check{\calE}^{(J)}[\psi] = \sum_{k\in \N} k(k+1) (\psi(k+1) - \psi(k))^2
\]
We rewrite it as
$$
\check{\calE}^{(J)}[\psi] = \int_{  \N}\int_{  \N}  (\psi(k+1) - \psi(k))^2 \ \frac{b(k,j)}{a_k a_j}\ d\mu_{disc}(j) d\mu_{disc}(k),
$$
and the associated jump kernel is
\begin{eqnarray}
J(k, d\mu_{disc}(j)) = \frac{b(k,j)}{a_k a_j}\ d\mu_{disc}(j).
\label{jump}
\end{eqnarray}
We shall consider two metrics on $F$, first the Euclidean metric.
\begin{theo}
\begin{enumerate}
\item Assume that $\mu_{disc}(F)<\infty$. Then whatever the metric considered in $F$,  $\ccalE$ is not conservative.
\item  Assume that $\mu_{disc}(F)=\infty$.  We consider $F$ endowed with the Euclidean metric. If
\begin{eqnarray}
\sup_{k\in\N} \frac{k^2}{a_k}<\infty,
\label{CondMixedA}
\end{eqnarray}
and
\begin{eqnarray}
\liminf_{k\to \infty} \frac{\ln (\sum_{j=1}^{k} a_j)}{k \ln k} < \infty,
\end{eqnarray}
then $\ccalE$ is conservative.
\end{enumerate}
\label{ConsMixedA}
\end{theo}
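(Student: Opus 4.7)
The plan is to treat the two parts by independent methods. For part (1), I would invoke transience of $\ccalE$ established in Theorem \ref{Trans2}, together with the classical fact that a transient Dirichlet form on a finite measure space cannot be conservative (since conservativeness would force $\mathbf{1}\in L^2(F,\mu)$ to be an invariant function of finite norm, and on a finite measure space conservativeness is equivalent to recurrence, contradicting transience). The hypothesis $\mu_{disc}(F)<\infty$ yields $\mu(F)=\mu_{ac}((0,1))+\mu_{disc}(\N)=\tfrac13+\mu_{disc}(\N)<\infty$, independently of any choice of metric, which disposes of (1).

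For part (2), I would apply the Masamune--Uemura--Wang criterion recalled immediately before the statement. The background hypotheses (A1)--(A3) and (M1)--(M2) of \cite{MasamuneConserv} reduce to routine checks: $F=[0,1]\cup\N$ is locally compact and separable in the Euclidean metric, $\mu$ is Radon with full support, and the explicit decomposition $\ccalE=\check{\calE}^{(c)}+\check{\calE}^{(J)}$ supplied by Theorem \ref{TraceMixed} displays $\ccalE$ as the sum of a strongly local Dirichlet form on $[0,1]$ and a regular pure-jump form on $\N$ with kernel (\ref{jump}).

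The first quantitative check is the jump-moment bound (\ref{jump-metric}). Because $b(k,j)$ vanishes unless $|k-j|=1$ and because $d(k,k\pm1)=1$ in the Euclidean metric, for $k\geq 2$ the integral reduces to
\[
\int_{y\neq k}(1\wedge d(k,y)^2)\,J(k,dy)=\sum_{j\sim k}\frac{b(k,j)}{a_k}=\frac{k(k-1)+k(k+1)}{2a_k}=\frac{k^2}{a_k},
\]
with the analogous single-term bound $1/a_1$ at $k=1$. Hypothesis (\ref{CondMixedA}) thus gives (\ref{jump-metric}) uniformly in $k$; at points $x\in(0,1)$ the jump kernel vanishes, so nothing needs to be checked there.

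For the volume growth condition (\ref{volume-conservation}), fix the origin as reference point and observe that the Euclidean ball of radius $k$ in $F$ is, for $k\geq 1$, essentially $[0,1]\cup\{1,\dots,\lfloor k\rfloor\}$, so $V(k)=\tfrac13+\sum_{j=1}^{\lfloor k\rfloor}a_j$ and $\ln V(k)=\ln\sum_{j=1}^{k}a_j+O(1)$. The second hypothesis of (2) therefore translates directly into $\liminf_{k\to\infty}\ln V(k)/(k\ln k)<\infty$, and the Masamune--Uemura--Wang theorem yields conservativeness of $\ccalE$. The most delicate step is the bookkeeping needed to verify the abstract framework of \cite{MasamuneConserv} for the mixed space $F$ (in particular pairing the strongly local part on $[0,1]$ with the pure-jump part on $\N$), but this is routine given the concrete representation provided by Theorem \ref{TraceMixed}.
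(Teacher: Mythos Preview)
Your proposal is correct and follows essentially the same approach as the paper: part (1) uses transience together with the equivalence of conservativeness and recurrence on a finite measure space (exactly as in the proof of Theorem \ref{NSC-BesselCons}, Case 1), and part (2) verifies the jump-moment bound (\ref{jump-metric}) by the same computation yielding $\sup_k k^2/a_k$, then identifies $V(k)=\tfrac13+\sum_{j\le k}a_j$ and invokes \cite[Theorem 1.1]{MasamuneConserv}. Your write-up is in fact slightly more detailed than the paper's (you make explicit the case $x\in(0,1)$ and the endpoint $k=1$), but the argument is the same.
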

\begin{proof}
Assertion 1. can be proved as the first assertion Theorem \ref{BesselCons}. We omit its proof.\\
2.  Condition (\ref{jump-metric}) reads
$$
\sup_{k\in \N} \frac{1}{a_k} \sum_{j: j\neq k} b(k,j) (1\wedge d(k,j)^2 )<\infty.
$$
A straightforward computation yields
\begin{eqnarray*}
\sup_{k\in \N} \frac{1}{a_k} \sum_{j: j\neq k} b(k,j) (1\wedge |k-j|^2 )
& = &  \sup_{k\in \N} \frac{1}{a_k} \left( b(k,k-1) + b(k,k+1) \right)\\
& = &  \sup_{k\in \N} \frac{1}{a_k} \left( \frac{k(k-1)}{2} + \frac{k(k+1)}{2} \right)  \\
& = &  \sup_{k\in \N} \frac{k^2}{a_k}.
\end{eqnarray*}
We have $V(k)= \frac{1}{3}+ \sum_{j=1}^k a_j$. Thus, for large $k$ the volume $V(k)$ is comparable to $\sum_{j=1}^k a_j$. Finally we  get the result owing to  \cite[Theorem 1.1]{MasamuneConserv}.
\end{proof}
We shall now consider an other metric on $F$ as in \cite[Example 5.4]{MasamuneConserv}. For vertices $k,j\in\N$ such that $ k\sim j$, we set
$$
\sigma(k,j) := \frac{1}{\sqrt{deg(k)}} \wedge \frac{1}{\sqrt{deg(j)}}\wedge 1,
$$
where
$$
deg(k) = \frac{1}{a_k} \sum_{j\sim k}\ b(k,j)= \frac{k^2}{a_k}.
$$
Thus
$$
\sigma(k,j) = \frac{\sqrt{a_k}}{k} \wedge  \frac{\sqrt{a_j}}{j}\wedge 1.
$$
We define the standard adapted metric $d$, first on the discrete part of $F$  by
$$
d(k,j) = \inf \{ \sum_{i=k}^{j-1} \sigma(i,i+1)\}\ \quad\text{ for }  k\leq j.
$$
For $k\geq j$, $d(k,j)$ is defined in an obvious way. The metric $d$ is then extended by linear interpolation  on the set $F$ to a metric which we still denote by $d$.\\
Let us observe that the metric $d$ is adapted, i.e.
\[
\sup_{k,j}\{d(k,j),\ k\sim j\}<\infty.
\]
\begin{theo}
Assume that $\mu_{disc}(F)=\infty$ and
\begin{eqnarray}
\liminf_{k\to \infty} \frac{\ln (\sum_{j=1}^{k} a_j)}{k \ln k} < \infty.
\end{eqnarray}
Then $\check{\calE}$ is conservative.
\label{ConsMixedB}
\end{theo}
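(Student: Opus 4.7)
The strategy is to invoke \cite[Theorem 1.1]{MasamuneConserv} in the same spirit as in Theorem \ref{ConsMixedA}, but now with the adapted metric $d$ on $F$ in place of the Euclidean metric. The whole point of this substitution is that the adapted metric contracts distances precisely where $a_k$ is small, so the uniform jump-metric bound (\ref{jump-metric}) becomes automatic and the auxiliary condition (\ref{CondMixedA}) is no longer needed. The abstract hypotheses (A1)--(A3) and (M1)--(M2) of \cite{MasamuneConserv} concern the form and not the underlying metric, so they are verified exactly as for Theorem \ref{ConsMixedA}; moreover $d$ is adapted by construction, since $\sup_{k\sim j}d(k,j)=\sup_{k}\sigma(k,k+1)\leq 1$.

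The crucial computational step is to verify (\ref{jump-metric}) with respect to $d$. Recalling the jump kernel (\ref{jump}), the bound $\sigma(k,j)^2\leq 1/\deg(k)=a_k/k^2$ for $j\sim k$ yields
\[
\frac{1}{a_k}\sum_{j\neq k} b(k,j)\bigl(1\wedge d(k,j)^2\bigr)\leq \frac{1}{a_k}\sum_{j\sim k}b(k,j)\sigma(k,j)^2\leq \frac{1}{a_k}\cdot\frac{a_k}{k^2}\sum_{j\sim k}b(k,j)=1,
\]
where I have used $\sum_{j\sim k}b(k,j)=a_k\deg(k)=k^2$. Hence (\ref{jump-metric}) holds with constant $1$, \emph{uniformly} in the sequence $(a_k)$; this is precisely the gain afforded by the adapted metric over the Euclidean one.

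It remains to verify the volume growth condition (\ref{volume-conservation}). Let $V(R)$ denote the $\mu$-volume of the $d$-ball of radius $R$ around $1\in F$, and set $N(R):=\max\{k\in\N:d(1,k)\leq R\}$; then $V(R)\leq \tfrac{1}{3}+\sum_{j=1}^{N(R)}a_j$. I would pick a subsequence $k_n\to\infty$ along which $\ln\bigl(\sum_{j=1}^{k_n}a_j\bigr)/(k_n\ln k_n)$ stays bounded by the assumption, set $R_n:=d(1,k_n)$ (so that $N(R_n)=k_n$), and estimate $R_n\ln R_n$ from below to conclude that $\ln V(R_n)/(R_n\ln R_n)$ stays bounded as well. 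The hypothesis $\mu_{disc}(F)=\infty$ ensures $R_n\to\infty$, and conservativeness of $\ccalE$ then follows from \cite[Theorem 1.1]{MasamuneConserv}.

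The delicate point is precisely the lower bound on $R_n$. Since $R_n=\sum_{i=1}^{k_n-1}\sigma_i$ may be much smaller than $k_n$ when many $\sigma_i$ are strictly less than $1$, the trivial inequality $R_n\leq k_n$ is not enough. One way out is to split into cases: if $\sigma_i$ stays bounded away from $0$ along a subsequence, then $R_n$ and $k_n$ are comparable there and the assumption transfers directly; if instead $\sigma_i\to 0$ then $a_i=o(i^2)$, so $\sum_{j=1}^k a_j=O(k^3)$ and $V(R)$ is dominated by a polynomial in $N(R)$, whose growth in $R$ is controlled by the same adapted-metric computation. Either way, the $\liminf$ in (\ref{volume-conservation}) is finite and conservativeness follows.
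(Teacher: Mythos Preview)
Your verification of the jump--metric bound (\ref{jump-metric}) for the adapted metric is exactly what the paper does: both arguments use $\sigma(k,j)^2\le a_k/k^2$ together with $\sum_{j\sim k}b(k,j)=k^2$ to obtain the uniform bound~$1$. The paper's proof in fact \emph{stops} at that point: it says ``it suffices to prove $\sup_k\frac{1}{a_k}\sum_{j\sim k}b(k,j)(1\wedge d(k,j)^2)<\infty$ and to use \cite[Theorem 1.1]{MasamuneConserv}'', establishes this bound, and ends. In other words, the paper does not separately verify the volume growth condition (\ref{volume-conservation}) in the adapted metric; it simply identifies it with the hypothesis of the theorem.

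You go further and try to check (\ref{volume-conservation}) for the $d$-balls, and it is here that your argument acquires genuine gaps. First, the assertion ``$\mu_{disc}(F)=\infty$ ensures $R_n\to\infty$'' is false: take $a_k=1/k$, so that $\mu_{disc}(\N)=\sum_k 1/k=\infty$, yet $\sigma(k,k+1)\le\sqrt{a_k}/k=k^{-3/2}$ and hence $\sup_n R_n\le\sum_k k^{-3/2}<\infty$; the metric space $(F,d)$ is bounded and the limit $R\to\infty$ in (\ref{volume-conservation}) never occurs. Second, your dichotomy does not do what you claim. Saying that ``$\sigma_i$ stays bounded away from $0$ along a subsequence'' only means $\limsup_i\sigma_i>0$; it does \emph{not} imply that $R_{k_n}=\sum_{i<k_n}\sigma_i$ is comparable to $k_n$ along the particular subsequence $(k_n)$ supplied by the hypothesis, so the stated $\liminf$ bound cannot be ``transferred directly''. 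The complementary case ``$\sigma_i\to 0$'' does give $\sum_{j\le k}a_j=O(k^3)$, but you still need a lower bound on $R$ in terms of $N(R)$ to control $\ln N(R)/(R\ln R)$, and no such bound is provided. As written, the volume--growth portion of your argument does not close; to match the paper you should simply stop after the jump--metric estimate and appeal to \cite[Theorem 1.1]{MasamuneConserv}.
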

\begin{proof}
It suffices to prove
\[
\sup_{k\in \N} \frac{1}{a_k} \sum_{j: j\sim k} b(k,j) (1\wedge d(k,j)^2 )<\infty,
\]
and to use \cite[Theorem 1.1]{MasamuneConserv}.\\
A straightforward computation yields
\begin{eqnarray*}
\frac{1}{a_k} \sum_{j: j\sim k} b(k,j) (1\wedge d(k,j)^2 )
& = &  \frac{1}{a_k} \big( (1\wedge d(k,k-1)^2 )b(k,k-1)\\
& +& (1\wedge d(k,k+1)^2 )b(k,k+1) \big)\\
& = &  \frac{1}{a_k} \left(  \sigma(k,k-1)^2 \frac{k(k-1)}{2} + \sigma(k,k+1)^2 \frac{k(k+1)}{2} \right)  \\
&\leq & \frac{1}{a_k} \left( \frac{a_k}{k^2}\frac{k(k-1)}{2} +  \frac{a_k}{k^2}\frac{k(k+1)}{2}\right)
=1,
\end{eqnarray*}
which was to be proved.
\end{proof}
\begin{rk}
\begin{enumerate}
\item According to Theorem \ref{Conservative}-2), condition  (\ref{CondMixedA}), yields also the conservativeness of the trace of $\calE$ on $\N$.
\item Compared to Theorem \ref{ConsMixedA}, the latter theorem is stronger. Indeed, if we choose $\mu_{disc}$ to be the counting measure then condition (\ref{CondMixedA}) is not fulfilled. Accordingly we are  not able to conclude about the conservativeness of $\ccalE$. However, with the help of Theorem \ref{ConsMixedB} we obtain the conservativeness of $\ccalE$.

\end{enumerate}
\end{rk}

\bibliography{BiblioCons}

\end{document}